\newtheorem{theorem}{Theorem}[section]
\newtheorem{lemma}[theorem]{Lemma}
\theoremstyle{definition}
\newtheorem{definition}[theorem]{Definition}
\newtheorem{remark}[theorem]{Remark}
\numberwithin{equation}{section}
\begin{document}

%%%%% To ease editing, for IMPAN journals add:

\baselineskip=17pt

%%%%%%%%%%%%%%%%

\title{Density of sequences of the form $x_n=f(n)^n$ in $[0,1]$}

\author{J.C. Saunders\\
Department of Mathematics and Statistics\\
Mathematical Sciences 476\\
University of Calgary\\
2500 University Drive NW\\
T2N 1N4 Calgary, Alberta, Canada\\
E-mail: john.saunders1@ucalgary.ca}

\date{}

\maketitle

%% Classification and key words; note that the 2010 classification is used:

\renewcommand{\thefootnote}{}

\footnote{2020 \emph{Mathematics Subject Classification}: Primary 11J71; Secondary 11B05.}

\footnote{\emph{Key words and phrases}: trigonometric functions; analtyic functions; density.}

\renewcommand{\thefootnote}{\arabic{footnote}}
\setcounter{footnote}{0}

%%%%%%%%

\begin{abstract}
In 2013, Strauch asked how various sequences of real numbers defined from trigonometric functions such as $x_n=(\cos n)^n$ distributed themselves$\pmod 1$. Strauch's inquiry is motivated by several such distribution results. For instance, Luca proved that the sequence $x_n=(\cos \alpha n)^n\pmod 1$ is dense in $[0,1]$ for any fixed real number $\alpha$ such that $\alpha/\pi$ is irrational. Here we generalise Luca's results to other sequences of the form $x_n=f(n)^n\pmod 1$. We also examine the size of the set $|\{n\leq N:r<|\cos(n\pi\alpha)|^n\}|$, where $0<r<1$ and $\alpha$ are fixed such that $\alpha/\pi$ is irrational.    
\end{abstract}

\section{Introduction}
In 2013, Strauch \cite{strauch} asked how various sequences of real numbers defined from trigonometric functions, such as $x_n=(\cos n)^n$, distributed themselves$\mod 1$. Strauch's inquiry is motivated by several such distribution results. In 1995, Berend, Boshernitzan, and Kolesnik \cite{berend1} proved that the sequence $x_n=(\log n)\cos(n\alpha)\pmod 1$ is dense in $[0,1]$. In 1997 Bukor \cite{bukor} proved that the sequence $x_n=(\cos n)^n$ is dense in $[-1,1]$, which was generalised to $x_n=(\cos\alpha n)^n$ for any fixed $\alpha\in\mathbb{R}$ such that $\alpha/\pi$ is irrational by Luca \cite{luca}. There have also been other results on other properties of the distribution of such sequences. First, a definition.
\begin{definition}
A \textit{distribution function} $g(x)$ of a sequence $(x_n)_n$ such that $0\leq x_n\leq 1$ is a non-decreasing function with $g(0)=0$, $g(1)=1$, and such that there exists an increasing sequence of natural numbers $(N_k)_k$ such that for every point of continuity $x$ of $g$, we have
\begin{equation*}
g(x)=\lim_{k\rightarrow\infty}\frac{|\left\{n\leq N_k:x_n\leq x\right\}|}{N_k}.
\end{equation*}
The sequence is called \textit{uniformly distributed} if $g(x)=x$ is the only distribution function of the sequence.
\end{definition}
In 1953, Kuipers \cite{kuipers} proved that the sequence $x_n=\cos(n+\log n)$ is not uniformly distributed. More recently, in 2011, Berend and Kolesnik \cite{berend2} proved that the sequence $x_n=P(n)\cos n\alpha$ is uniformly distributed for any non-constant polynomial $P(n)$ and any $\alpha\in\mathbb{R}$ such that $\cos(\alpha)$ is transcendental. Also, in 2013, Aistleitner, Hofer, and Madritsch \cite{aistleitner} characterised all of the distribution functions of the sequence $x_n=(\cos n)^n$.
\newline
\newline
Here we generalise Luca's results to other sequences of the form $x_n=f(n)^n$. Our main results are the following:
\begin{theorem}\label{thm1}
Let $g$ be an analytic real-valued function on $[-1,1]$ and suppose that $|g(x)|\leq 1$ for all $-1\leq x\leq 1$ and $g(1)=1$. Also suppose $g(r)\neq 1$ for some $-1<r<1$. Also pick $\alpha\in\mathbb{R}$ such that $\frac{\alpha}{\pi}$ is irrational. Then the set $\{|g(\cos\alpha n)|^n:n\in\mathbb{N}\}$ is dense in $[0,1]$. Also, the set $\{|g(\sin\alpha n)|^n:n\in\mathbb{N}\}$ is dense in $[0,1]$.
\end{theorem}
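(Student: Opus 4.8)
The plan is to reduce everything to estimating $|g(\cos\theta)|^n$ when $n$ is large and $\theta$ is a small quantity depending on $n$. First I would record the local behaviour of $g$ at $x=1$. Since $g$ is real-analytic near $1$ and $g(r)\neq 1$, the function $g-1$ is not identically zero, hence vanishes at $1$ to some finite order $k\geq 1$: $g(x)=1-c(1-x)^{k}+O((1-x)^{k+1})$ as $x\to 1$, with $c$ real. The hypotheses $|g|\leq 1$ on $[-1,1]$ and $g(1)=1$ force $c>0$ (look at $x<1$ near $1$). Because $1-\cos\theta=\tfrac12\theta^{2}(1+O(\theta^{2}))$ and $g(\cos\theta)\to 1>0$, this yields, for $|\theta|<\eta$,
\[
|g(\cos\theta)|=g(\cos\theta)=1-c'|\theta|^{2k}\bigl(1+O(\theta^{2})\bigr),\qquad
\log|g(\cos\theta)|=-c'|\theta|^{2k}\bigl(1+O(\theta^{2})\bigr),
\]
with $c'=c/2^{k}>0$. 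Note $|g(\cos\alpha n)|^{n}\in[0,1]$ always, so it suffices to show $(0,1)$ lies in the set of limit points: that set is closed, so it will then contain $[0,1]$, giving density.

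The core is a Diophantine construction. Write $\gamma=\alpha/(2\pi)$, which is irrational. By Dirichlet's theorem there are infinitely many $q$ with $\|q\gamma\|<1/q$; I would pick from these an increasing sequence $q_{1}<q_{2}<\cdots$ growing geometrically, say $q_{k+1}>2q_{k}$. Let $\theta_{q_{k}}\in(-\pi,\pi)$ represent $\alpha q_{k}\pmod{2\pi}$, so $|\theta_{q_{k}}|=2\pi\|q_{k}\gamma\|<2\pi/q_{k}$, and set $\rho_{k}:=c'q_{k}|\theta_{q_{k}}|^{2k}\leq c'(2\pi)^{2k}q_{k}^{1-2k}\to 0$. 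Given a target $t\in(0,1)$, take $n=mq_{k}$ with $m=m(k)=\bigl\lfloor(\log(1/t)/\rho_{k})^{1/(2k+1)}\bigr\rfloor$. The geometric growth of $q_{k}$ makes $m$ grow faster than any power of $k$, so $(m/(m+1))^{2k+1}\to 1$ and hence $m^{2k+1}\rho_{k}\to\log(1/t)$; the same estimates give $m|\theta_{q_{k}}|\to 0$, so that $\theta_{mq_{k}}=m\theta_{q_{k}}$ exactly (the principal representative does not wrap around) and the expansion above applies. Then
\[
|g(\cos\alpha m q_{k})|^{mq_{k}}
=\exp\!\bigl(mq_{k}\log|g(\cos(m\theta_{q_{k}}))|\bigr)
=\exp\!\bigl(-m^{2k+1}\rho_{k}(1+o(1))\bigr)\longrightarrow t .
\]
As $t\in(0,1)$ is arbitrary, this shows $(0,1)$ lies in the set of limit points, and the density statement for $(\cos\alpha n)$ follows.

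For the sine statement I would use $\sin\alpha n=\cos(\alpha n-\pi/2)$ and arrange $\alpha n$ close to $\pi/2\pmod{2\pi}$. By the classical inhomogeneous approximation bound $\liminf_{n}n\,\|n\gamma-\tfrac14\|\leq\tfrac14$ (valid since $\tfrac14\notin\mathbb{Z}+\gamma\mathbb{Z}$), there are infinitely many $n$ with $\|n\gamma-\tfrac14\|<1/n$, from which I pick an increasing, geometrically growing sequence $n_{k}$; write $n_{k}\gamma=a_{k}+\tfrac14+\delta_{k}$ with $a_{k}\in\mathbb{Z}$, $|\delta_{k}|<1/n_{k}$. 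Restricting to multipliers $m\equiv 1\pmod 4$, one has $\alpha m n_{k}\equiv \pi/2+2\pi m\delta_{k}\pmod{2\pi}$ when $|m\delta_{k}|<\tfrac14$, so $|g(\sin\alpha m n_{k})|=|g(\cos(2\pi m\delta_{k}))|$, which is exactly the previous situation with $\theta_{q_{k}}$ replaced by $2\pi\delta_{k}$ and $m$ running over $\{1,5,9,\dots\}$. Since the ideal multiplier tends to infinity, rounding it to the nearest element $\equiv 1\pmod 4$ changes $m^{2k+1}\rho_{k}$ by a factor $\to 1$, and the same computation produces every $t\in(0,1)$ as a limit point.

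The step I expect to be the main obstacle is making the two error terms vanish at once: one needs $m|\theta_{q_{k}}|\to 0$, so that both the Taylor expansion of $g$ at $1$ and the identity $\theta_{mq_{k}}=m\theta_{q_{k}}$ are legitimate, while simultaneously keeping $m^{2k+1}\rho_{k}$ bounded and convergent to the prescribed value $\log(1/t)$. This balancing act is exactly why one needs the quantitative gap between $|\theta_{q_{k}}|\asymp 1/q_{k}$ and the geometric growth of $q_{k}$ (so that $m=(\log(1/t)/\rho_{k})^{1/(2k+1)}$ dominates $k$), and in the sine case one must additionally check that the congruence $m\equiv 1\pmod 4$ does not coarsen the resulting net of values — a minor but necessary piece of bookkeeping.
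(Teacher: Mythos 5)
Your argument is correct, but it is not the route the paper takes, so a comparison is in order. The paper factors the proof through a Diophantine lemma of Luca (Lemma \ref{lem1}), which for any prescribed constant $a>0$ produces infinitely many $n$ with $n-mT=\pm\bigl(Tc/n^{1/\delta}+\Lambda_1\bigr)$ and a very small error $\Lambda_1$; density then comes from letting the constant $c$ range over $(0,\infty)$ in Lemma \ref{lem2}, and the sine case is handled by writing $\sin(n\alpha)=\cos\bigl(n(\tfrac{\pi}{2}-\alpha)\bigr)$ for $n\equiv 1\pmod 4$ and using the congruence restrictions built into Luca's lemma. You instead start from plain Dirichlet approximation $\lVert q\gamma\rVert<1/q$ and achieve the tuning by a multiplier: choosing $n=mq_k$ with $m\asymp(\log(1/t)/\rho_k)^{1/(2K+1)}$ steers $n\log|g(\cos\alpha n)|$ to $-\log(1/t)$, and your balancing act checks out --- since $\rho_k\le c'(2\pi)^{2K}q_k^{1-2K}\to 0$ one gets $m\to\infty$, $m^{2K+1}\rho_k\to\log(1/t)$, and $m|\theta_{q_k}|\asymp\bigl(\log(1/t)\,|\theta_{q_k}|/(c'q_k)\bigr)^{1/(2K+1)}\to 0$, so the Taylor expansion applies and the representative does not wrap; for the sine case you replace the congruence trick by inhomogeneous approximation of $\tfrac14$ by $n\gamma$. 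Your version is more self-contained (it avoids Luca's lemma entirely), while the paper's lemma is packaged so that the same machinery transfers to Theorem \ref{thm2} via Zaharescu's theorem. Three small points to tidy: you use $k$ both for the vanishing order of $g-1$ at $1$ and as the sequence index (harmless but confusing in $\rho_k=c'q_k|\theta_{q_k}|^{2k}$); the remark about geometric growth of $q_k$ is unnecessary, since only $q_k\to\infty$ is used; and the one-sided bound $\liminf_{n\to+\infty}n\lVert n\gamma-\tfrac14\rVert\le\tfrac14$ needs care because Minkowski's theorem allows negative $q$, but any bound $\lVert n\gamma-\tfrac14\rVert=O(1/n)$ along infinitely many positive $n$ (which follows, e.g., from convergent denominators of $\gamma$) suffices for your argument.
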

\begin{theorem}\label{thm2}
Let $g$ be an analytic real-valued function on $[-1,1]$ and suppose that $|g(x)|\leq 1$ for all $-1\leq x\leq 1$ and $g(1)=1$. Also, suppose there exists $-1<r<1$ such that $g(r)\neq 1$. Also let $\alpha\in\mathbb{R}$. Then the set $\{|g(\cos\alpha n^2)|^n:n\in\mathbb{N}\}$ is dense in $[0,1]$.
\end{theorem}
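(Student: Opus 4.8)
The plan is to linearise $g$ near $x=1$, reduce the problem to a statement about how small $\lVert n^{2}\beta\rVert$ can be made compared with $1/n$ (here $\beta:=\alpha/(2\pi)$ and $\lVert\cdot\rVert$ denotes the distance to the nearest integer), and then invoke a Weyl/Heilbronn-type estimate for the fractional parts of $n^{2}\beta$. Throughout I take $\alpha/\pi$ irrational, so that $\beta$ is irrational; if $\alpha/\pi\in\mathbb{Q}$ then the values $\cos(\alpha n^{2})$ are finitely many and occur along arithmetic progressions, so the displayed set is not dense (e.g.\ $\alpha=0$ gives $\{1\}$), and this seems to be the intended hypothesis.

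I would first record the local behaviour of $g$ at $1$. Since $g$ is real-analytic and, by analyticity together with $g(r)\neq1$, not identically $1$, write $g(x)-1=a_{k}(x-1)^{k}+O\bigl((x-1)^{k+1}\bigr)$ with $k\ge1$ and $a_{k}\neq0$. As $\lvert g\rvert\le1$ forces $g(x)\le1$ for $x$ slightly below $1$, the number $a_{k}(-1)^{k}$ must be negative, so $g(x)=1-b(1-x)^{k}\bigl(1+O(1-x)\bigr)$ with $b:=\lvert a_{k}\rvert>0$; moreover $g(x)>0$ near $1$, so $\lvert g(x)\rvert=g(x)$ there. Combining this with $1-\cos(2\pi\psi)=2\pi^{2}\psi^{2}(1+O(\psi^{2}))$ gives, for every $n$ with $\psi_{n}:=\lVert n^{2}\beta\rVert$ small enough,
\[
\lvert g(\cos\alpha n^{2})\rvert=1-c\,\psi_{n}^{2k}\bigl(1+O(\psi_{n}^{2})\bigr),\qquad c:=b(2\pi^{2})^{k}>0,
\]
and hence, taking logarithms and using $k\ge1$, $\lvert g(\cos\alpha n^{2})\rvert^{n}=\exp\!\bigl(-c\,n\psi_{n}^{2k}(1+o(1))\bigr)$ along any such sequence tending to infinity.

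It follows that whenever $(n_{j})$ satisfies $\psi_{n_{j}}\to0$ and $n_{j}\psi_{n_{j}}^{2k}\to L\in[0,\infty)$ we get $\lvert g(\cos\alpha n_{j}^{2})\rvert^{n_{j}}\to e^{-cL}$, so it is enough to prove the key lemma: \emph{for irrational $\beta$ and every integer $k\ge1$, the set $\{\,n\lVert n^{2}\beta\rVert^{2k}:n\in\mathbb{N}\,\}$ is dense in $[0,\infty)$.} Granted this, the closure of $\{\lvert g(\cos\alpha n^{2})\rvert^{n}\}$ contains $\{e^{-cL}:L>0\}=(0,1)$; being closed and contained in $[0,1]$, it equals $[0,1]$, which is the theorem. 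I would prove the lemma in two stages. \emph{Stage 1:} by Heilbronn's theorem $\min_{1\le n\le N}\lVert\beta n^{2}\rVert\ll_{\varepsilon}N^{-1/2+\varepsilon}$; since $\lVert\beta m^{2}\rVert$ is bounded below for each fixed $m$ ($\beta$ irrational), the near-optimal $n$ must tend to $\infty$ with $N$, so (as $n\le N$) there are infinitely many $n_{0}$ with $\lVert n_{0}^{2}\beta\rVert\ll n_{0}^{-1/2+\varepsilon}$; for $k\ge2$ this already gives $F(n_{0}):=n_{0}\lVert n_{0}^{2}\beta\rVert^{2k}\to0$ along a subsequence, and for $k=1$ one obtains the same from any improvement of Heilbronn's bound with exponent strictly above $1/2$. \emph{Stage 2:} if $\psi_{0}:=\lVert n_{0}^{2}\beta\rVert$ then $n_{0}^{2}\beta\equiv\pm\psi_{0}\pmod1$, so for $1\le s\le(2\psi_{0})^{-1/2}$ one has $\lVert(sn_{0})^{2}\beta\rVert=s^{2}\psi_{0}$ exactly and therefore $F(sn_{0})=s^{4k+1}F(n_{0})$. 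As $s$ ranges over this interval these values climb from $F(n_{0})$ up to $\gg n_{0}\psi_{0}^{-1/2}\to\infty$, with consecutive gaps on $[0,L_{0}]$ of size $\ll_{k,L_{0}}F(n_{0})^{1/(4k+1)}\to0$; hence for $n_{0}$ far enough out $\{F(sn_{0}):s\}$ is $\varepsilon$-dense in $[0,L_{0}]$, and letting $L_{0}\to\infty$, $\varepsilon\to0$ proves the lemma. (The sequences produced satisfy $\psi_{n}=(F(n)/n)^{1/(2k)}\to0$, as needed above, and also realise $e^{-cL}$ for every $L\in(0,\infty)$.)

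The one genuinely hard ingredient is Stage 1 for $k=1$ — precisely the motivating case $g(x)=x$, i.e.\ $x_{n}=\lvert\cos\alpha n^{2}\rvert^{n}$ — where Heilbronn's bare exponent $1/2$ is just short of what is needed and one must appeal to a sharpening $\min_{n\le N}\lVert\beta n^{2}\rVert\ll N^{-\theta}$ with some fixed $\theta>1/2$ (Graham--Ringrose, Zaharescu, and subsequent work). The remaining pieces — the Taylor expansion of $g$, the exact scaling identity $\lVert(sn_{0})^{2}\beta\rVert=s^{2}\psi_{0}$ with the elementary counting it feeds, and the soft topological reduction to density in $[0,\infty)$ — are routine.
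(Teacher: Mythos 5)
Your argument is correct and is essentially the paper's: the Taylor expansion of $g$ at $x=1$ reduces the theorem to making $n\lVert n^2\alpha/(2\pi)\rVert^{2k}$ tend to any prescribed $L>0$ while $\lVert n^2\alpha/(2\pi)\rVert\to 0$, and your Stage 1 plus Stage 2 (Zaharescu's bound followed by the integer rescaling $n\mapsto sn$) is precisely what the paper's Lemma \ref{lem3} does, there with the single multiplier $N=\lfloor A^{1/(2+\gamma)}+1\rfloor$ computed outright instead of your $\varepsilon$-net over all $s$ (and, as you note, plain Heilbronn would already suffice when $k\ge2$). You are also right to add the hypothesis that $\alpha/\pi$ is irrational: as printed the theorem fails for $\alpha=0$ (the set is $\{1\}$), and correspondingly Lemma \ref{lem3} is false for rational $\zeta=p/q$, since a nonzero $|\zeta-m/n^2|$ is at least $1/(qn^2)$ and hence cannot equal $a/n^{2+\gamma}+\Lambda$ for infinitely many $n$.
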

\begin{theorem}\label{thm3}
Let $\alpha\in\mathbb{R}$. The set $\{\{\alpha n\}^n:n\in\mathbb{N}\}$ is dense in $[0,1]$ if and only if in the simple continued fraction expansion of $\alpha=[a_0;a_1,a_2,\ldots]$, we have
\begin{equation}
\limsup_{i\rightarrow\infty}a_{2i}=\infty.\label{eqn5}
\end{equation}
\end{theorem}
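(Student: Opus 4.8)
\bigskip

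\noindent\emph{Outline of a proof of Theorem~\ref{thm3}.}
The plan is to translate the question into one about how small $n\lVert\alpha n\rVert$ can be made, where $\lVert\cdot\rVert$ denotes distance to the nearest integer and $n$ runs over those positive integers with $\{\alpha n\}>\tfrac12$ (so that $\alpha n$ lies just below an integer and $\lVert\alpha n\rVert=1-\{\alpha n\}$). The elementary point is that $\{\alpha n\}^n$ can approach a value $y\in(0,1]$ only when $\{\alpha n\}$ is within $O(1/n)$ of $1$: if $\{\alpha n\}\le 1-c$ then $\{\alpha n\}^n\le(1-c)^n$, whereas if $\lVert\alpha n\rVert$ is small then $\log\bigl(\{\alpha n\}^n\bigr)=n\log\bigl(1-\lVert\alpha n\rVert\bigr)=-n\lVert\alpha n\rVert+O\bigl(n\lVert\alpha n\rVert^{2}\bigr)$. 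So, up to controlling that error term, density of $\{\{\alpha n\}^n:n\in\mathbb N\}$ in $[0,1]$ amounts to density of $\{\,n\lVert\alpha n\rVert:n\in\mathbb N,\ \{\alpha n\}>\tfrac12\,\}$ in $[0,\infty)$. If $\alpha$ is rational the continued fraction is finite, \eqref{eqn5} fails, and $\{\alpha n\}$ takes only finitely many values in $[0,1)$, so $\{\{\alpha n\}^n\}$ accumulates only at $0$ and is not dense; hence assume $\alpha=[a_0;a_1,a_2,\dots]$ irrational, with convergents $p_k/q_k$ and $\theta_k:=\lvert q_k\alpha-p_k\rvert=(q_k\xi_{k+1}+q_{k-1})^{-1}$ where $\xi_{k+1}=[a_{k+1};a_{k+2},\dots]$, so that $q_k\alpha-p_k=(-1)^k\theta_k$ and $q_k\theta_k=(\xi_{k+1}+q_{k-1}/q_k)^{-1}\in\bigl((a_{k+1}+2)^{-1},\,a_{k+1}^{-1}\bigr)$.

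For the direction ``\eqref{eqn5} $\Rightarrow$ dense'', fix $y=e^{-t}\in(0,1)$ and choose an even index $j$ with $a_j$ as large as we like; then $j-1$ is odd, so $q_{j-1}\alpha=p_{j-1}-\theta_{j-1}$ sits just below $p_{j-1}$. Put $n=mq_{j-1}$ with $m$ the nearest integer to $\bigl(t/(q_{j-1}\theta_{j-1})\bigr)^{1/2}$. Since $q_{j-1}\theta_{j-1}<1/a_j\to 0$, one checks $m\theta_{j-1}=O(1/q_{j-1})\to 0$, so $\{\alpha n\}=1-m\theta_{j-1}$; moreover $m^{2}q_{j-1}\theta_{j-1}\to t$ while the error term $n\lVert\alpha n\rVert^{2}=m^{3}q_{j-1}\theta_{j-1}^{2}=(m\theta_{j-1})\bigl(m^{2}q_{j-1}\theta_{j-1}\bigr)\to 0$, whence $\{\alpha n\}^n\to e^{-t}=y$. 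The choice $m=1$ gives $\{\alpha q_{j-1}\}^{q_{j-1}}=(1-\theta_{j-1})^{q_{j-1}}\ge 1-q_{j-1}\theta_{j-1}\ge 1-1/a_j$, covering $y$ near $1$, and letting $t\to\infty$ (or invoking equidistribution of $(\{\alpha n\})_n$) covers $y=0$. Hence $\{\{\alpha n\}^n\}$ is dense in $[0,1]$.

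For the converse I argue the contrapositive: suppose $a_{2i}\le M$ for all $i$ beyond some point. I claim $n\lVert\alpha n\rVert>(M+2)^{-1}$ for all sufficiently large $n$ with $\{\alpha n\}>\tfrac12$; granting this, $\{\alpha n\}^n\le e^{-n\lVert\alpha n\rVert}<e^{-(M+2)^{-1}}<1$ for those $n$ and $\{\alpha n\}^n\le 2^{-n}$ otherwise, so the set $\{\{\alpha n\}^n:n\in\mathbb N\}$ has at most finitely many elements in some interval $(1-\delta,1]$ and therefore is not dense in $[0,1]$. The claim rests on the classical description of one-sided best approximations (equivalently, the combinatorics behind the three-distance theorem): the integers at which $\lVert\alpha n\rVert$ reaches a new minimum over $\{n'\le n:\{\alpha n'\}>\tfrac12\}$ are exactly the intermediate denominators $n'=q_{2i-1}+mq_{2i}$ with $i\ge 1$ and $0\le m\le a_{2i+1}$ (forming one increasing chain, since $q_{2i-1}+a_{2i+1}q_{2i}=q_{2i+1}$), with $\lVert\alpha n'\rVert=\theta_{2i-1}-m\theta_{2i}$. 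So any $n$ with $\{\alpha n\}>\tfrac12$ has such a record $n'\le n$ with $\lVert\alpha n'\rVert\le\lVert\alpha n\rVert$, and thus $n\lVert\alpha n\rVert\ge n'\lVert\alpha n'\rVert=(q_{2i-1}+mq_{2i})(\theta_{2i-1}-m\theta_{2i})$. This last expression is a downward parabola in $m$, hence on $[0,a_{2i+1}]$ it is at least the smaller of its endpoint values $q_{2i-1}\theta_{2i-1}$ and $q_{2i+1}\theta_{2i+1}$; since $2i$ and $2i+2$ are even, both exceed $(M+2)^{-1}$ as soon as $i$ is large enough, and $n\to\infty$ forces the relevant block index $i$ to $\infty$. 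This proves the claim.

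The step I expect to be the main obstacle is the bookkeeping surrounding this structural input: pinning down the parity conventions (even-indexed convergents lie below $\alpha$ and odd-indexed ones above, so ``$\alpha n$ just below an integer'' is governed by the even $\theta_{2i}$), and rigorously justifying both that the one-sided records are precisely the stated intermediate denominators and that an arbitrary $n$ with $\{\alpha n\}>\tfrac12$ is dominated by one of them in the sense used above. A few edge cases — small partial quotients, the value $a_i=1$, finitely many anomalously large even partial quotients, and small $n$ — need separate checking, but each influences only finitely many terms of the sequence and so is irrelevant to density.
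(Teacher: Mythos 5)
Your proposal is correct and follows essentially the same strategy as the paper: both directions reduce to one-sided approximability of $\alpha$ from above by rationals, which is equivalent to \eqref{eqn5} (the paper's Lemma \ref{lem5}), and your density half uses the same dilation trick $n=mq_{j-1}$ with $m\approx\bigl(t/(q_{j-1}\theta_{j-1})\bigr)^{1/2}$ that the paper packages as Lemma \ref{lem6}. The only material difference is in the non-density half, where you derive the lower bound $n\lVert\alpha n\rVert>(M+2)^{-1}$ from the classification of one-sided best-approximation records as intermediate fractions together with a concavity argument, whereas the paper obtains an equivalent bound more directly from Legendre's theorem (non-convergents satisfy $\frac{a}{b}-\alpha\geq\frac{1}{2b^2}$) plus the standard convergent estimates; both inputs are classical and your application of them is sound.
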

\begin{remark}
We deduce in Lemma \ref{lem5} that for any $\alpha\in\mathbb{R}$ the continued fraction expansion of $\alpha$ satisfies \eqref{eqn5} if and only if for every $\epsilon>0$ there exists $a,b\in\mathbb{Z}$ such that
\begin{equation*}
\frac{a}{b}-\frac{\epsilon}{b^2}\leq\alpha<\frac{a}{b}.    
\end{equation*}
In other words $\alpha$ is not badly approximable by rational numbers greater than $\alpha$. Note that this does not imply that $\alpha$ is not badly approximable by rational numbers less than $\alpha$ and that it is quite possible to construct an $\alpha$ as a counterexample. Indeed, it can be deduced that $\alpha$ will be a counter example if and only if the continued fraction expansion of $\alpha$ satisfies \eqref{eqn5}, but that there exists $C>0$ such that $a_{2i+1}<C$ for all $i\in\mathbb{N}$.
\end{remark}
Thanks to Luca \cite{luca} we know that the sequence $x_n=|\cos \alpha n|^n$ is dense in $[0,1]$, but we can also see that most of the terms in this sequence will be very close to $0$. This result begs the question on the density of the number of $N\in\mathbb{N}$ that satisfy $|\{n\leq N:r<|\cos(n\pi\alpha)|^n\}|$ for any fixed $0<r<1$. While it is to be expected that the density of such a set should be $0$, it turns out that we can construct an irrational number $\alpha$ that makes such a subset of $\mathbb{N}$ as large as desired so long as the density of this subset in $\mathbb{N}$ is still $0$.
\begin{theorem}\label{thm4}
Let $0<r<1$ and let $f(n):\mathbb{N}\rightarrow\mathbb{R}$ such that
\begin{equation}
\lim_{n\rightarrow\infty}f(n)=0.\label{eqn7}
\end{equation}
There exists $\alpha\in\mathbb{R}\setminus\mathbb{Q}$ such that there exist infinitely many $N\in\mathbb{N}$ such that
\begin{equation*}
|\{n\leq N:r<|\cos(n\pi\alpha)|^n\}|\geq f(N)\cdot N.
\end{equation*}
\end{theorem}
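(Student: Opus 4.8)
The plan is to build a Liouville-type irrational $\alpha=\sum_{k\ge 1}1/q_k$ for a rapidly growing sequence of integers $q_1<q_2<\cdots$, exploiting the elementary fact that if $\alpha$ is extremely well approximated by a rational $p/q$, then $\|n\alpha\|$ (the distance from $n\alpha$ to the nearest integer) is tiny not only for $n=q$ but for \emph{every} small multiple $n$ of $q$. First a reduction: since $|\cos(n\pi\alpha)|=\cos(\pi\|n\alpha\|)$ for every real $\alpha$, and since $\arccos(r^{1/n})\sqrt n\to\sqrt{-2\log r}>0$ as $n\to\infty$ while $\arccos(r^{1/n})>0$ for every $n\ge1$, the number $c_0:=\inf_{n\ge1}\arccos(r^{1/n})\sqrt n$ is strictly positive. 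Hence there is a constant $c=c(r)>0$, depending only on $r$, such that for all $n\ge1$,
\begin{equation*}
\|n\alpha\|\le\frac{c}{\sqrt n}\ \Longrightarrow\ |\cos(n\pi\alpha)|^n>r .
\end{equation*}
So it suffices to produce $\alpha\in\mathbb{R}\setminus\mathbb{Q}$ and infinitely many $N$ for which $\left|\{n\le N:\|n\alpha\|\le c/\sqrt n\}\right|\ge f(N)\,N$.

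Now the construction, carried out recursively. Having chosen $q_1<\cdots<q_k$ with $q_{j+1}\ge 2q_j$, set $Q_k=q_1\cdots q_k$, $p_k/Q_k=\sum_{j=1}^k 1/q_j$, and $\epsilon_k=\left|\alpha-p_k/Q_k\right|=\sum_{j>k}1/q_j<2/q_{k+1}$. For any integer $j\ge1$ the number $n=jQ_k$ satisfies $n\alpha=jp_k+jQ_k(\alpha-p_k/Q_k)$, so $\|n\alpha\|\le jQ_k\epsilon_k$, and this is at most $c/\sqrt n=c/\sqrt{jQ_k}$ precisely when $(jQ_k)^{3/2}\epsilon_k\le c$, i.e.\ when $j\le J_k:=\lfloor Q_k^{-1}(c/\epsilon_k)^{2/3}\rfloor$. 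Put $N_k=J_kQ_k$. Then $\{Q_k,2Q_k,\dots,J_kQ_k\}$ consists of $J_k$ distinct integers $\le N_k$, each satisfying $\|n\alpha\|\le c/\sqrt n$, whence
\begin{equation*}
\left|\{n\le N_k:r<|\cos(n\pi\alpha)|^n\}\right|\ \ge\ J_k\ =\ \frac{N_k}{Q_k}.
\end{equation*}

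It remains to fix $q_{k+1}$. With $q_1,\dots,q_k$ (hence $Q_k$) frozen, letting $q_{k+1}\to\infty$ forces $\epsilon_k\to0$, hence $J_k\to\infty$ and $N_k=J_kQ_k\to\infty$. Because $f(N)\to0$, there is $m_k$ with $f(m)<1/Q_k$ for all $m\ge m_k$; choose $q_{k+1}$ large enough that $q_{k+1}\ge 2q_k$, that $q_{k+1}>2Q_k^2$ (so $\epsilon_k<1/Q_k^2$), that $N_k\ge m_k$, and that $N_k>N_{k-1}$. Then for $N=N_k$ we obtain $\left|\{n\le N:r<|\cos(n\pi\alpha)|^n\}\right|\ge N_k/Q_k>f(N_k)\,N_k$, and carrying this out for every $k$ yields the required infinite family of $N$'s. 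Finally $\alpha$ is irrational: if $\alpha=a/b$ then for any $k$ with $Q_k>b$ the rational $a/b-p_k/Q_k$ is nonzero (its value is $\epsilon_k>0$) yet has absolute value $\epsilon_k<1/Q_k^2<1/(bQ_k)$, which is impossible.

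The only delicate point is the exponent bookkeeping in the third paragraph: the crude bound $\|n\alpha\|\le n\epsilon_k$ must be balanced against the target $c/\sqrt n$, which pins the usable range to $n\le N_k$ of order $\epsilon_k^{-2/3}$ and makes the guaranteed count of order $N_k/Q_k$. The argument then succeeds precisely \emph{because} $f\to0$: once $Q_k$ has been chosen, one retains complete freedom to enlarge $q_{k+1}$, and with it $N_k$, until $f(N_k)$ drops below $1/Q_k$.
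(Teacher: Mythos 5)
Your proof is correct and takes essentially the same route as the paper: both construct a Liouville-type $\alpha$ from a lacunary series of unit fractions and count the multiples $n=jQ_k$ of the partial-sum denominator, for which $\|n\alpha\|\le c/\sqrt{n}$ forces $|\cos(n\pi\alpha)|^n>r$ via $1-\cos x\sim x^2/2$. The differences are cosmetic (the paper uses denominators $10^{d_i}$ and fixes the target $N$ before choosing the next exponent, whereas you fix $q_{k+1}$ and let $N_k$ fall out); the one point worth making explicit is that, since $\epsilon_k$ depends on all later $q_j$, the conditions $N_k\ge m_k$ and $N_k>N_{k-1}$ should be verified through the uniform bound $\epsilon_k\le 2/q_{k+1}$, which your argument already supplies.
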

On the other hand, our final result gives us a bound that holds for any irrational $\alpha$.
\begin{theorem}\label{thm5}
Let $0<r<1$ and $\alpha\in\mathbb{R}\setminus\mathbb{Q}$. Then there exists infinitely many $N\in\mathbb{N}$ such that
\begin{equation*}
|\{n\leq N:r<|\cos(n\pi\alpha)|^n\}|\geq\frac{5^{1/4}(1-r)^{1/4}N^{1/4}}{2\sqrt{\pi}}.
\end{equation*}
\end{theorem}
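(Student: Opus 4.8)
The plan is to reduce the event $r<|\cos(n\pi\alpha)|^n$ to a smallness condition on $\|n\alpha\|$, the distance from $n\alpha$ to the nearest integer, and then to produce many such $n$ below a suitable threshold $N$ by using the multiples of a single well-chosen denominator from the continued fraction of $\alpha$.

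First I would establish the reduction: if $\|n\alpha\|<\dfrac{\sqrt{2(1-r)}}{\pi\sqrt{n}}$, then $r<|\cos(n\pi\alpha)|^n$. Writing $\delta=\|n\alpha\|\in[0,\tfrac12]$ we have $|\cos(n\pi\alpha)|=\cos(\pi\delta)\ge 1-\tfrac{\pi^2\delta^2}{2}\ge 0$, while the hypothesis gives $\tfrac{\pi^2\delta^2}{2}<\tfrac{1-r}{n}<1$; Bernoulli's inequality then yields
\[
|\cos(n\pi\alpha)|^n\ \ge\ \Bigl(1-\tfrac{\pi^2\delta^2}{2}\Bigr)^{\!n}\ \ge\ 1-n\cdot\tfrac{\pi^2\delta^2}{2}\ >\ 1-(1-r)\ =\ r.
\]

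Next, for each large index $k$ I would take the convergent $p_k/q_k$ of $\alpha$, so that $\|q_k\alpha\|<1/q_k$ (alternatively, Hurwitz's theorem supplies infinitely many $q$ with $\|q\alpha\|<\tfrac{1}{\sqrt5\,q}$, which only improves the constant). Writing $q=q_k$ and looking at $n=q,2q,\dots,mq$, the triangle inequality gives $\|jq\alpha\|\le j\|q\alpha\|<j/q$, so by the reduction it suffices that $\dfrac{j}{q}\le\dfrac{\sqrt{2(1-r)}}{\pi\sqrt{jq}}$, i.e.\ $j^3\le\dfrac{2(1-r)q}{\pi^2}$. Hence, with
\[
m=\left\lfloor\Bigl(\tfrac{2(1-r)q}{\pi^2}\Bigr)^{1/3}\right\rfloor\qquad\text{and}\qquad N=mq,
\]
every element of $\{q,2q,\dots,mq\}$ lies in the set being counted, so $\bigl|\{n\le N:\,r<|\cos(n\pi\alpha)|^n\}\bigr|\ge m$.

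It then remains to check the arithmetic. Since $N=mq$, the target inequality $m\ge\frac{5^{1/4}(1-r)^{1/4}N^{1/4}}{2\sqrt\pi}$ is equivalent (raise to the fourth power and cancel $m$) to $m\ge\frac{5^{1/3}(1-r)^{1/3}q^{1/3}}{2^{4/3}\pi^{2/3}}$, and since $\frac{2^{1/3}}{\pi^{2/3}}>\frac{5^{1/3}}{2^{4/3}\pi^{2/3}}$ — equivalently $2^5>5$ — the value of $m$ chosen above, being within $1$ of $\bigl(2(1-r)q/\pi^2\bigr)^{1/3}$, exceeds the required quantity as soon as $q$ is large enough to absorb the floor. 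Running this over the infinitely many convergent denominators $q$ yields infinitely many admissible $N=mq$. The argument has no deep obstacle; the delicate point is the quantitative balance in the middle step — a denominator of size $q$ furnishes roughly $q^{1/3}$ usable multiples, all below $N\asymp q^{4/3}$, which is exactly what gives the exponent $\tfrac14$ — together with the routine verification that the floor loss and the explicit constant $\tfrac{5^{1/4}}{2\sqrt\pi}$ are comfortably met (with further slack available from Hurwitz's theorem if a cleaner constant is desired).
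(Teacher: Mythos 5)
Your proposal is correct and follows essentially the same route as the paper: reduce the event $r<|\cos(n\pi\alpha)|^n$ to a condition $\|n\alpha\|\ll\sqrt{(1-r)}/\sqrt{n}$, then take the multiples $jq$ of a single good denominator $q$ (the paper uses Hurwitz's theorem, you use convergents, which costs a harmless constant you recover elsewhere), obtaining about $q^{1/3}$ admissible $n$ below $N\asymp q^{4/3}$. The constant verification you sketch does go through with the slack factor $(32/5)^{1/3}$ you identify.
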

\section{Proofs}
We require a couple of lemmas to prove Theorem \ref{thm1}.
\begin{lemma}[Luca\cite{luca}]\label{lem1}
Let $\zeta,a,b$, and $\gamma$ be real numbers with $\zeta$ irrational, $a,b>0$, $0<\gamma<1$. Also let $r=0,1,2$, or $3$ and $s=0$ or $1$. Then there exists infinitely many pairs of integers $m,n$ such that
\begin{equation*}
\zeta-\frac{m}{n}=\epsilon\left(\frac{a}{n^{1+\gamma}}+\Lambda\right),
\end{equation*}
where $\epsilon=\pm 1$ and $0<\Lambda<\frac{b}{n^{1+\gamma}\log n}$. Moreover, we can have the restrictions that $n>0$, $n\equiv r\pmod 4$, and $m\equiv s\pmod 2$.
\end{lemma}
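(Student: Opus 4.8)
The plan is as follows. Since the lemma only asks for infinitely many pairs realizing \emph{some} choice of sign, it suffices to produce pairs with $\epsilon=+1$. For such a pair the asserted identity, multiplied through by $n$, reads
$$\frac{a}{n^{\gamma}}<n\zeta-m<\frac{a}{n^{\gamma}}+\frac{b}{n^{\gamma}\log n}.$$
For all large $n$ the right-hand side is $<\tfrac12$, so this forces $m=\lfloor n\zeta\rfloor$, and the condition becomes $\{n\zeta\}\in I_n:=\bigl(an^{-\gamma},\,an^{-\gamma}+bn^{-\gamma}/\log n\bigr)\subset(0,\tfrac12)$; conversely, given such an $n$, taking $m=\lfloor n\zeta\rfloor$ and $\Lambda:=\bigl(\{n\zeta\}-an^{-\gamma}\bigr)/n>0$ yields a valid pair. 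Thus it is enough to find infinitely many large $n$ with $n\equiv r\pmod 4$, $\{n\zeta\}\in I_n$, and $\lfloor n\zeta\rfloor\equiv s\pmod 2$.

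Next I would encode all three side conditions geometrically. Pass to $\theta:=\zeta/2$ (still irrational): a short computation shows that once $I_n\subset(0,\tfrac12)$, the two conditions ``$\{n\zeta\}\in I_n$ and $\lfloor n\zeta\rfloor\equiv s\pmod 2$'' are together equivalent to the single condition $\{n\theta\}\in J_n:=\tfrac{s}{2}+\tfrac12 I_n$, an interval of length $\tfrac{b}{2}n^{-\gamma}/\log n$ lying in $(0,\tfrac14)$ when $s=0$ and in $(\tfrac12,\tfrac34)$ when $s=1$. To impose $n\equiv r\pmod 4$, write $n=4j+r$ and set $\psi:=4\theta=2\zeta$ (irrational); then $\{n\theta\}=\{j\psi+r\theta\}$, and the problem reduces to: \emph{find infinitely many $j\ge 1$ with} $\{j\psi+r\theta\}\in J_{4j+r}$, a shrinking target of length $\asymp j^{-\gamma}/\log j$ whose location drifts like $\asymp aj^{-\gamma}$.

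Because $0<\gamma<1$ we have $\sum_j j^{-\gamma}/\log j=\infty$, so infinitely many hits are expected; to prove it I would use the three-gap theorem at a carefully chosen scale. Fix a large index $k$, let $q_k$ be the $k$-th continued-fraction denominator of $\psi$, and recall that for $N\asymp q_k$ the points $\{j\psi\}_{1\le j\le N}$ are $(2/q_k)$-dense in $[0,1)$ (the largest three-gap length at $N=q_k-1$ being $\|q_{k-1}\psi\|+\|q_k\psi\|<2/q_k$). Choose $M_k:=\lceil q_k^{\,c}\rceil$ for a fixed exponent $c$ with $1<c<1/\gamma$, and look at the block of indices $j\in(M_k,M_k+q_k]$. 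Two estimates drive the argument: (i) across this block the endpoints of $J_{4j+r}$ move by only $\asymp q_kM_k^{-\gamma-1}$, which is $\ll M_k^{-\gamma}/\log M_k\asymp$ the common width of these intervals (here $c>1$ is used), so all the $J_{4j+r}$ with $j$ in the block share a common sub-arc $\widetilde S_k$ of length $\gg M_k^{-\gamma}/\log M_k$; (ii) since $c\gamma<1$ one has $2/q_k\ll M_k^{-\gamma}/\log M_k$, so $\widetilde S_k$ exceeds the largest gap of the $(2/q_k)$-dense orbit block. Hence some $j_0\in(M_k,M_k+q_k]$ satisfies $\{j_0\psi+r\theta\}\in\widetilde S_k\subseteq J_{4j_0+r}$.

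Then $n:=4j_0+r$ (so $n>0$, $n\equiv r\pmod 4$, $n>4M_k\to\infty$) together with $m:=\lfloor n\zeta\rfloor$ (so $m\equiv s\pmod 2$, by the encoding) gives a pair obeying the lemma with $\epsilon=+1$; letting $k\to\infty$ produces infinitely many distinct $n$, hence infinitely many pairs. I expect the main obstacle to be exactly the feature that the target $J_{4j+r}$ depends on the \emph{same} index $j$ that drives the rotation, so that a single off-the-shelf shrinking-target statement for $\{j\psi\}$ does not apply directly; the device above circumvents this by making the block length ($q_k$) far smaller than where the block sits ($M_k\asymp q_k^c$), so the target is essentially frozen over an entire block while the orbit is still fine enough on the scale $1/q_k$ to land inside it. The remaining care is the bookkeeping of the two congruences through $n=4j+r$ and $\theta=\zeta/2$, and the precise three-gap bound at $N\asymp q_k$.
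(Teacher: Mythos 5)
Your proposal is correct in substance, and note that the paper itself gives no proof of this lemma: it is imported from Luca \cite{luca}, and the only internal indication of the intended method is the proof of its $n^2$-analogue, Lemma \ref{lem3}, where a single very good rational approximation (there supplied by Zaharescu's theorem) is dilated by an integer factor $N=\lfloor A^{1/(2+\gamma)}+1\rfloor$ so that $a/n^{2+\gamma}$ just undershoots the approximation error and the excess fits under the $b/(n^{2+\gamma}\log n)$ term. Your route is genuinely different: you recast the conclusion (with $\epsilon=+1$) as the shrinking-target condition $\{n\zeta\}\in I_n$ with $m=\lfloor n\zeta\rfloor$, encode the parities through $\theta=\zeta/2$ and $\psi=4\theta$ (your equivalence $\{n\theta\}\in\frac{s}{2}+\frac{1}{2}I_n$ is correct once $I_n\subset(0,\frac{1}{2})$), and then hit the target via the three-distance theorem on a block of $q_k$ consecutive indices based at $M_k\asymp q_k^{c}$ with $1<c<1/\gamma$; the two inequalities that make the block-freezing device work, $q_kM_k^{-\gamma-1}\ll M_k^{-\gamma}/\log M_k$ (uses $c>1$) and $2/q_k\ll M_k^{-\gamma}/\log M_k$ (uses $c\gamma<1$), both check out. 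What your approach buys is a self-contained, elementary proof in which the congruence restrictions on $n$ and $m$ are handled transparently; what the dilation method buys is that it transfers to the $n^2$ setting, which is exactly what the paper needs in Lemma \ref{lem3}. Two small repairs for a write-up: the three-gap bound should be quoted for the $q_k$ points of your block (a rotate of $\{j\psi\}_{0\le j<q_k}$, maximal gap $\|q_{k-1}\psi\|+\|q_k\psi\|<2/q_k$), not for $N=q_k-1$ as in your parenthetical; and you only produce $\epsilon=+1$, which does satisfy the statement as quoted (and is all the paper ever uses, since it applies the lemma to $\bigl|\tfrac1T-\tfrac mn\bigr|$), though the mirror-image target $(1-an^{-\gamma}-bn^{-\gamma}/\log n,\,1-an^{-\gamma})$ with $m=\lceil n\zeta\rceil$ would give $\epsilon=-1$ if both signs were wanted.
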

\begin{lemma}\label{lem2}
Let $f:\mathbb{R}\rightarrow[0,1]$ be periodic with irrational period $T$. Also suppose $f(0)=1$ and that there exists $\delta>1$ such that
\begin{equation*}
\lim_{x\rightarrow 0}\frac{f(x)-1}{|x|^{\delta}}=q<0.
\end{equation*}
Then the set $\{f(n)^n:n\in\mathbb{N}\}$ is dense in $[0,1]$.
\end{lemma}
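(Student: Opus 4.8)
The plan is to exhibit, for each real number $c>0$, a subsequence of $(f(n)^n)_n$ that converges to $e^{qc}$. Since $q<0$, the numbers $e^{qc}$ with $c$ ranging over $(0,\infty)$ fill out the entire open interval $(0,1)$; as every term $f(n)^n$ lies in $[0,1]$ and $(0,1)$ is dense in $[0,1]$, density of $\{f(n)^n:n\in\mathbb{N}\}$ in $[0,1]$ follows immediately. The mechanism is that near $0$ the hypothesis gives $f(x)=1+q|x|^{\delta}+o(|x|^{\delta})$, so if we can choose $n$ whose distance $|\eta_n|$ to the nearest integer multiple of $T$ satisfies $|\eta_n|^{\delta}\sim c/n$, then by periodicity $f(n)=f(\eta_n)=1+(qc+o(1))/n$ and hence $f(n)^n\to e^{qc}$.

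Everything thus reduces to a sufficiently precise Diophantine approximation statement, which is exactly what Lemma \ref{lem1} supplies. I would apply it with $\zeta=1/T$ (irrational since $T$ is, and we may assume $T>0$), with exponent $\gamma=1/\delta$ — note $\gamma\in(0,1)$ precisely because $\delta>1$, which is where that hypothesis enters — with $a=c^{1/\delta}/T$, and with $b=1$. This produces infinitely many integer pairs $(m,n)$, $n>0$ (necessarily with $n\to\infty$ along them, since for each $n$ only boundedly many $m$ can occur), such that $1/T-m/n=\epsilon\left(an^{-1-1/\delta}+\Lambda\right)$ with $0<\Lambda<n^{-1-1/\delta}/\log n$. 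Setting $\eta_n:=n-mT$ and multiplying through by $Tn$ gives $\eta_n=\epsilon T\left(an^{-1/\delta}+n\Lambda\right)$, and since $n\Lambda<n^{-1/\delta}/\log n=o\!\left(an^{-1/\delta}\right)$ we obtain $|\eta_n|=Ta\,n^{-1/\delta}(1+o(1))$; in particular $\eta_n\to 0$.

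It then remains to feed this into the local behaviour of $f$. For $n$ large, periodicity gives $f(n)=f(n-mT)=f(\eta_n)$, and the limit hypothesis gives $f(\eta_n)=1+q|\eta_n|^{\delta}(1+o(1))=1+q(Ta)^{\delta}n^{-1}(1+o(1))$. Since $(Ta)^{\delta}=\left(c^{1/\delta}\right)^{\delta}=c$, this reads $f(n)=1+(qc+o(1))/n$, whence $f(n)^n=\left(1+(qc+o(1))/n\right)^n\to e^{qc}$, which completes the argument along the lines described above.

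The main obstacle — in fact the only delicate point — is the bookkeeping of error terms: one must verify that the $\Lambda$-contribution from Lemma \ref{lem1}, the factor $(1+o(1))^{\delta}$ coming from raising $|\eta_n|=Ta\,n^{-1/\delta}(1+o(1))$ to the power $\delta$, and the $o(|x|^{\delta})$ remainder in the hypothesis on $f$ all collapse into a single $o(1/n)$ inside the base $1+(qc+o(1))/n$, so that the limit is genuinely $e^{qc}$ rather than $e^{qc(1+o(1))}$ with an uncontrolled factor. Getting the exponents to match — so that $n|\eta_n|^{\delta}$ tends to the constant $c$ rather than to $0$ or $\infty$ — is exactly what forces the choices $\gamma=1/\delta$ and $a=c^{1/\delta}/T$; the remaining estimates are routine.
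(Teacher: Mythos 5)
Your proposal is correct and follows essentially the same route as the paper: apply Lemma \ref{lem1} to $\zeta=1/T$ with $\gamma=1/\delta$, convert the approximation into $n-mT=O(n^{-1/\delta})$ with a controlled main term, use periodicity and the limit hypothesis to get $f(n)=1+(qc+o(1))/n$, and let $c$ range over $(0,\infty)$ to make the limits $e^{qc}$ sweep out $(0,1)$. The only difference is your cosmetic renormalisation $a=c^{1/\delta}/T$ so that the limit is $e^{qc}$ rather than the paper's $e^{q(Tc)^{\delta}}$, which changes nothing since $c$ is arbitrary in both.
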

\begin{proof}
Let $c>0$ be a constant. By Lemma \ref{lem1} there exists infinitely many pairs of integers $m,n$ such that
\begin{equation*}
\left|\frac{1}{T}-\frac{m}{n}\right|=\frac{c}{n^{1+1/\delta}}+\Lambda,
\end{equation*}
where $0<\Lambda<\frac{1}{n^{1+1/\delta}\log n}$. Then
\begin{equation*}
n-mT=\epsilon\left(\frac{Tc}{n^{1/\delta}}+\Lambda_1\right),
\end{equation*}
where $0<\Lambda_1<\frac{T}{n^{1/\delta}\log n}$. Thus there are infinitely many integers $n$ satisfying
\begin{equation*}
f(n)=f\left(\epsilon\left(\frac{Tc}{n^{1/\delta}}+\Lambda_1\right)\right),
\end{equation*}
where $0<\Lambda_1<\frac{T}{n^{1/\delta}\log n}$. Let these integers be $n_1,n_2,\ldots$. Then we have
\begin{align*}
\lim_{k\rightarrow\infty}\frac{f(n_k)-1}{\left(\frac{Tc}{n_k^{1/\delta}}+\Lambda_{1,k}\right)^{\delta}}=\lim_{k\rightarrow\infty}\frac{f\left(\epsilon_k\left(\frac{Tc}{n_k^{1/\delta}}+\Lambda_{1,k}\right)\right)-1}{\left(\frac{Tc}{n_k^{1/\delta}}+\Lambda_{1,k}\right)^{\delta}}=q.
\end{align*}
Thus we can deduce
\begin{equation*}
\lim_{k\rightarrow\infty}n_kf(n_k)-n_k=q(Tc)^{\delta}.
\end{equation*}
Hence $f(n_k)=1+\frac{q(Tc)^{\delta}}{n_k}(1+o(1))$ as $k\rightarrow\infty$. It follows that
\begin{equation*}
\lim_{k\rightarrow\infty}f(n_k)^{n_k}=e^{q(Tc)^{\delta}}.
\end{equation*}
Since $c>0$ was arbitrary, we have the result.
\end{proof}
\begin{proof}[Proof of Theorem \ref{thm1}]
Let $f(x)=|g(\cos \alpha x)|$. Then $f$ has range $[0,1]$, is periodic with period $\frac{2\pi}{\alpha}$, and $f(0)=1$. Since $g$ is analytic, we have that $h(x)=g(\cos\alpha x)$ is analytic on the real line and so $h(x)$ must have a Taylor series about $x=0$. Since $g$ isn't identically $1$, we can therefore deduce that there exists a positive integer $d$ such that the limit
\begin{equation*}
\lim_{x\rightarrow 0}\frac{f(x)-1}{x^d}.
\end{equation*}
exists, is finite, and is nonzero. Since
\begin{equation*}
0=h'(0)=\lim_{x\rightarrow 0}\frac{h(x)-1}{x}=\lim_{x\rightarrow 0}\frac{f(x)-1}{x}
\end{equation*}
$d$ must be at least $2$. Applying Lemma \ref{lem2} gives that the set $\{|g(\cos \alpha n)|^n:n\in\mathbb{N}\}$ is dense in $[0,1]$. Moreover, we can repeat this argument to derive that the set $\{|g(\cos \alpha n)|^n:n\in\mathbb{N},\text{ }n\equiv 1\pmod 4\}$ is also dense in $[0,1]$. Noticing that $\sin(n\alpha)=\cos\left(n\left(\frac{\pi}{2}-\alpha\right)\right)$ also gives that the set $\{|g(\sin\alpha n)|^n:n\in\mathbb{N}\}$ is dense in $[0,1]$.
\end{proof}
The proof of Theorem \ref{thm2} follows the same proof technique as Theorem \ref{thm1} only we require analogous results to Lemmas \ref{lem1} and \ref{lem2} with $n$ being replaced by $n^2$. Such analogous results can be proved from the following theorem of Zaharescu \cite{zaharescu}.
\begin{theorem}[Zaharescu\cite{zaharescu}]\label{zaharescu}
Let $0<\theta<\frac{2}{3}$. Then for every real number $\zeta$ there exist infinitely many pairs of integers $m,n$ such that
\begin{equation}
\left|\zeta-\frac{m}{n^2}\right|<\frac{1}{n^{2+\theta}}.\label{eqn1}
\end{equation}
\end{theorem}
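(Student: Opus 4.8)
The plan is to recast the statement in terms of fractional parts: writing $\|x\|$ for the distance from $x$ to the nearest integer, the existence of an integer $m$ with $|\zeta-m/n^2|<n^{-(2+\theta)}$ is equivalent to $\|n^2\zeta\|<n^{-\theta}$, so it suffices to produce infinitely many $n$ with $\|n^2\zeta\|<n^{-\theta}$. If $\zeta=p/q\in\mathbb{Q}$, every multiple $n$ of $q$ gives $\|n^2\zeta\|=0$ and we are done; so assume $\zeta$ is irrational, with convergents $p_k/q_k$, so that $\|q_k\zeta\|<1/q_{k+1}$ for every $k$. Since $\|q_k^2\zeta\|\le q_k\|q_k\zeta\|<q_k/q_{k+1}$, any index $k$ with $q_{k+1}>q_k^{1+\theta}$ already furnishes a solution $n=q_k$. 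Hence one may restrict to $\zeta$ whose continued fraction is \emph{tame}, meaning $q_{k+1}\le q_k^{1+\theta}$ for all large $k$; this is where the real work lies, and I would argue by contradiction, assuming $\|n^2\zeta\|\ge n^{-\theta}$ for all $n\ge n_0$.

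In the tame case I would fix a large index $k$, set $\delta=(2q_k)^{-\theta}$ and $H\asymp\delta^{-1}$, and count the integers $n$ in a window of length $\asymp q_k$ lying below $q_{k+1}$ (for instance $(q_k,2q_k]$, with a minor adjustment in the degenerate case $a_{k+1}=1$) for which $\|n^2\zeta\|<\delta$. By the Erd\H{o}s--Tur\'an inequality, or by testing a Selberg minorizing trigonometric polynomial of degree $H$ against the indicator of $(-\delta,\delta)$, this count is at least $2\delta q_k-O(q_k/H)-\sum_{1\le h\le H}h^{-1}|S_h|$, where $S_h=\sum_{q_k<n\le 2q_k}e(hn^2\zeta)$. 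The standing assumption forces the count to vanish, so a contradiction follows once one shows $\sum_{h\le H}h^{-1}|S_h|=o(q_k^{1-\theta})$. For the generic $h$, a Dirichlet approximation to $h\zeta$ has denominator $\asymp q_k$ and feeds into Weyl's inequality for quadratic exponential sums; the remaining $h$, those for which $h\zeta$ lies unusually close to a rational with small denominator, form a sparse set handled by the standard minor-arc bookkeeping, the tameness of the continued fraction keeping their contribution under control.

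The crux, and the step I expect to be the main obstacle, is the quality of the bound on $S_h$. Weyl's inequality applied to the full window $(q_k,2q_k]$ yields only $|S_h|\ll q_k^{1/2+\varepsilon}$, which is best possible for a Gauss-type sum of that length, and this limits the method to $\theta<1/2$. To reach every $\theta<2/3$ one cannot evaluate a full-interval Weyl sum; one must exploit the tame continued-fraction structure more sharply. One natural route is to restrict $n$ to a subfamily adapted to $q_k$, say an Ostrowski-type expansion $n=bq_k+c$ in which the linear term $2bcq_k\zeta$ is nearly an integer, so that the Gauss sum is replaced by a shorter, better-behaved sum governed by quantities $\|j\zeta\|$ that are well controlled in the tame regime; another is a van der Corput reduction of $\|n^2\zeta\|$ to values $\|k\zeta\|$ for products $k\le q_k^{2}$, which inherit the same control. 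Either way, balancing the size of the relevant family against $\delta$ is what pushes the exponent up to $2/3$, and carrying out that balancing carefully is precisely the substance of Zaharescu's argument.
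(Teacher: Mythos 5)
This statement is not proved in the paper at all: it is quoted verbatim from Zaharescu's 1995 Inventiones paper and used as a black box, so there is no in-paper argument to compare yours against. Judged on its own terms, your proposal is a correct reduction plus an honest admission that the decisive step is missing, and that missing step is a genuine gap, not a routine detail. The opening reductions are fine: passing to $\|n^2\zeta\|<n^{-\theta}$, dispatching rational $\zeta$, harvesting $n=q_k$ whenever $q_{k+1}>q_k^{1+\theta}$, and restricting to the tame case $q_{k+1}\le q_k^{1+\theta}$ are all standard and correct. The Erd\H{o}s--Tur\'an/Selberg-minorant setup with $\delta\asymp q_k^{-\theta}$, $H\asymp\delta^{-1}$ is also the right shape. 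But, as you yourself observe, the quadratic Weyl/Gauss-sum bound $|S_h|\ll q_k^{1/2+\varepsilon}$ is essentially sharp for sums of length $q_k$, so the error term $\sum_{h\le H}h^{-1}|S_h|$ can only be forced below the main term $\asymp q_k^{1-\theta}$ when $\theta<\tfrac12$. Everything you have written, carried out carefully, therefore proves the theorem only in the range $0<\theta<\tfrac12$, which is (up to epsilons) the classical result that predates Zaharescu; the whole content of the cited theorem is the improvement from $\tfrac12$ to $\tfrac23$.

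For that improvement you offer only candidate mechanisms --- an Ostrowski-type restriction $n=bq_k+c$ killing the cross term, or a van der Corput reduction of $\|n^2\zeta\|$ to linear quantities $\|j\zeta\|$ --- without specifying the family, the resulting shorter sums, or the balancing that is supposed to yield the exponent $\tfrac23$, and you explicitly concede that this balancing ``is precisely the substance of Zaharescu's argument.'' That is exactly the part that cannot be waved at: Zaharescu's proof is a delicate argument built on the continued-fraction (Ostrowski) structure and careful counting, and nothing in your sketch shows that either of your proposed routes actually closes the gap between $\tfrac12$ and $\tfrac23$ rather than stalling at the same square-root barrier. Since the paper itself treats the result as an external citation, the appropriate fix here is either to do likewise (cite Zaharescu and do not attempt a proof) or to commit to one of your two routes and carry the exponent bookkeeping all the way through; as it stands, the proposal does not establish the statement for any $\theta\ge\tfrac12$.
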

\begin{lemma}\label{lem3}
Let $\zeta,a,b$, and $\gamma$ be real numbers with $a,b>0$, $0<\gamma<\frac{2}{3}$. Then there exist infinitely many pairs of integers $m,n$ such that
\begin{equation*}
\zeta-\frac{m}{n^2}=\epsilon\left(\frac{a}{n^{2+\gamma}}+\Lambda\right),
\end{equation*}
where $\epsilon=\pm 1$ and $0<\Lambda<\frac{b}{n^{2+\gamma}\log n}$.
\end{lemma}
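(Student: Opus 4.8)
The plan is to derive Lemma \ref{lem3} from Zaharescu's theorem in exactly the same way that Lemma \ref{lem1} (Luca's result) is used in the proof of Lemma \ref{lem2}, namely by a slight perturbation argument that trades a clean inequality $|\zeta - m/n^2| < n^{-2-\theta}$ for the split form $\epsilon(a n^{-2-\gamma} + \Lambda)$ with a controlled lower-order term $\Lambda$. First I would fix the target exponent $\gamma$ with $0 < \gamma < 2/3$ and choose an auxiliary exponent $\theta$ with $\gamma < \theta < 2/3$; applying Theorem \ref{zaharescu} with this $\theta$ produces infinitely many pairs $(m,n)$ with $|\zeta - m/n^2| < n^{-2-\theta}$.

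Next I would set $\epsilon = \operatorname{sgn}(\zeta - m/n^2)$ (discarding the harmless cases where equality holds) and define $\Lambda$ by the forced identity $\Lambda = |\zeta - m/n^2| - a n^{-2-\gamma}$. The point is that, since $\theta > \gamma$, the Zaharescu bound gives $|\zeta - m/n^2| < n^{-2-\theta} = o(n^{-2-\gamma}/\log n)$, which is much smaller than $a n^{-2-\gamma}$ for all large $n$; hence $\Lambda$ is negative. This is the opposite sign from what the lemma wants, so the naive approach needs one more idea: rather than subtracting $a n^{-2-\gamma}$ from an already-tiny quantity, I would instead choose the integer $m$ slightly differently — replace $m$ by a nearby integer $m'$ so that $\zeta - m'/n^2$ is on the order of $a n^{-2-\gamma}$ rather than on the order of $n^{-2-\theta}$. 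Concretely, since consecutive fractions with denominator $n^2$ are spaced $1/n^2$ apart and $a n^{-2-\gamma} \le 1/n^2$ for large $n$, there is an integer $m'$ with $m'/n^2$ lying in the interval $(\zeta - a n^{-2-\gamma} - n^{-2-\theta}, \zeta - a n^{-2-\gamma}]$ on the appropriate side, whence $\zeta - m'/n^2 = \epsilon(a n^{-2-\gamma} + \Lambda)$ with $0 < \Lambda < n^{-2-\theta} < b n^{-2-\gamma}/\log n$ for all sufficiently large $n$, using again $\theta > \gamma$ to absorb the $\log n$ and the constant $b$.

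The main obstacle — and it is a mild one — is making sure that infinitely many distinct pairs survive this adjustment: replacing $m$ by $m'$ changes the numerator but keeps the denominator $n^2$, and since Zaharescu's theorem already yields infinitely many admissible $n$ (with the $n$'s necessarily unbounded, as a fixed $n$ admits only finitely many good $m$), the adjusted pairs $(m', n)$ are also infinite in number. One should also check that the sign $\epsilon$ can be taken uniformly, or else pass to an infinite subsequence on which it is constant, which is automatic by pigeonhole. With Lemma \ref{lem3} in hand, the analogue of Lemma \ref{lem2} with $n$ replaced by $n^2$ follows verbatim from the argument already given — writing $n^2 - mT = \epsilon(Tc\, n^{-\gamma} + \Lambda_1)$, deducing $f(n) = 1 + q(Tc)^{\delta} n^{-2}(1 + o(1))$ along the relevant subsequence after taking $\gamma = 2/\delta$ (which requires $\delta > 3$; for $2 < \delta \le 3$ one replaces $g$ by a suitable power or argues directly that a stronger vanishing order only helps), and concluding $f(n)^{n} \to e^{\,\text{const}}$ with the constant ranging over all of $(-\infty, 0)$ as $c$ varies — and then Theorem \ref{thm2} follows by taking $f(x) = |g(\cos \alpha x)|$ exactly as in the proof of Theorem \ref{thm1}.
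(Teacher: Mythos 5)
There is a genuine gap at the key step of your argument. You correctly identify the problem with the naive approach: Zaharescu's theorem makes $|\zeta-m/n^2|$ far \emph{smaller} than $a n^{-2-\gamma}$, so the forced $\Lambda$ has the wrong sign. But your proposed fix --- replacing $m$ by a nearby integer $m'$ so that $m'/n^2$ lands in the interval $\left(\zeta-an^{-2-\gamma}-n^{-2-\theta},\,\zeta-an^{-2-\gamma}\right]$ --- cannot work. That interval has length $n^{-2-\theta}$, while consecutive fractions with denominator $n^2$ are spaced $1/n^2\gg n^{-2-\theta}$ apart, so generically no such $m'$ exists. The observation that $an^{-2-\gamma}\le 1/n^2$ is irrelevant to this: shifting the numerator by $\pm 1$ moves $\zeta-m/n^2$ in jumps of size $1/n^2$, which is hopelessly coarse compared with the required precision $b n^{-2-\gamma}/\log n$. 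Concretely, the only candidates for $\zeta-m'/n^2$ in $(0,1/n^2)$ are either the original tiny quantity $<n^{-2-\theta}$ (too small) or something within $n^{-2-\theta}$ of $1/n^2$ (too large), and neither sits in the window $\left(an^{-2-\gamma},\,an^{-2-\gamma}+bn^{-2-\gamma}/\log n\right)$.

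The missing idea is to change the \emph{denominator} rather than the numerator: replace the Zaharescu pair $(u_t,v_t)$ by $(N^2u_t,\,Nv_t)$ for a suitably chosen large integer $N$. This keeps the value $m/n^2=u_t/v_t^2$ fixed, hence keeps the gap $\zeta-u_t/v_t^2$ fixed, while enlarging $n$ so that the comparison quantity $a/n^{2+\gamma}=a/(N^{2+\gamma}v_t^{2+\gamma})$ shrinks until it just undershoots that gap. Taking $N=\lfloor A^{1/(2+\gamma)}+1\rfloor$ with $A=a/((\zeta v_t^2-u_t)v_t^{\gamma})$ makes $\Lambda>0$, and the overshoot is controlled because a unit change in $N$ perturbs $N^{2+\gamma}$ by a relative amount $O(A^{-1/(2+\gamma)})$, which is polynomially small in $A$ (note $A>av_t^{\theta-\gamma}\to\infty$) and therefore beats the required threshold of order $1/\log(Nv_t)$. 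Your overall framing (choose $\theta$ with $\gamma<\theta<2/3$, invoke Zaharescu, pass to a subsequence of constant sign) matches the paper, but without this rescaling device the proof does not go through. As a secondary point, in your sketch of the subsequent lemma the correct choice is exponent $2+1/\delta$ (so that $n^2\cdot n^{-2-1/\delta}=n^{-1/\delta}$), requiring only $1/\delta<2/3$, i.e.\ $\delta>3/2$; your choice $\gamma=2/\delta$ and the ensuing restriction $\delta>3$ are not needed.
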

\begin{proof}
Without loss of generality, we may assume that $\zeta>0$. Let $\gamma<\theta<\frac{2}{3}$. By Theorem \ref{zaharescu}, there exist two increasing sequences of positive integers $(u_t)_t$ and $(v_t)_t$ such that \eqref{eqn1} holds with $m=u_t$ and $n=v_t$ for all $t\in\mathbb{N}$. For the rest of the proof we'll assume that $\zeta-\frac{u_t}{v_t^2}>0$. The negative case follows similarly. By \eqref{eqn1} we get
\begin{equation*}
v_t^2\zeta-u_t<v_t^{-\theta}.
\end{equation*}
Thus
\begin{equation*}
\frac{a}{(\zeta v_t^2-u_t)v_t^{\gamma}}>av_t^{\theta-\gamma}.
\end{equation*}
Let $A=\frac{a}{(\zeta v_t^2-u_t)v_t^{\gamma}}$. Let $N=\lfloor A^{1/(2+\gamma)}+1\rfloor$. Since $N$ increases with $t$, we can choose $A$ sufficiently large such that $N<2A$. Then $A<N^{2+\gamma}$ so that we can deduce
\begin{equation*}
\frac{a}{N^{2+\gamma}v_t^{2+\gamma}}<\zeta-\frac{u_t}{v_t^2}.
\end{equation*}
To complete the proof it suffices to show that
\begin{equation*}
\zeta-\frac{u_t}{v_t^2}-\frac{a}{N^{2+\gamma}v_t^{2+\gamma}}<\frac{b}{N^{2+\gamma}v_t^{2+\gamma}\log(Nv_t)}
\end{equation*}
or
\begin{equation*}
N^{2+\gamma}<\frac{a}{(\zeta v_t^2-u_t)v_t^{\gamma}}+\frac{b}{(\zeta v_t^2-u_t)v_t^{\gamma}\log(Nv_t)}=A+\frac{Ab}{a(\log N+\log v_t)}.
\end{equation*}
It therefore suffices to show that
\begin{equation*}
(A^{1/(2+\gamma)}+1)^{2+\gamma}<A+\frac{Ab}{a(\log(2A)+\log v_t)}.
\end{equation*}
or
\begin{equation}
\left(1+\frac{1}{A^{1/(2+\gamma)}}\right)^{2+\gamma}-1<\frac{b}{a(\log(2A)+\log v_t)}.\label{eqn2}
\end{equation}
Since $A>av_t^{\theta-\gamma}$, we have
\begin{equation*}
v_t<\left(\frac{A}{a}\right)^{1/(\theta-\gamma)}
\end{equation*}
so that \eqref{eqn2} follows if
\begin{equation*}
\left(1+\frac{1}{A^{1/(2+\gamma)}}\right)^{2+\gamma}-1<\frac{b}{a(\log(2A)+(\theta-\gamma)^{-1}\log(A/a))}.
\end{equation*}
Noticing that
\begin{equation*}
\left(1+\frac{1}{A^{1/(2+\gamma)}}\right)^{2+\gamma}-1=O\left(\frac{1}{A^{1/(2+\gamma)}}\right)
\end{equation*}
and
\begin{equation*}
\frac{b}{a(\log(2A)+(\theta-\gamma)^{-1}\log(A/a))}=O\left(\frac{1}{\log A}\right)
\end{equation*}
we can choose $t$ and therefore $A$ large enough so that we have our result.
\end{proof}
\begin{lemma}
Let $f:\mathbb{R}\rightarrow[0,1]$ be periodic and $\alpha\in\mathbb{R}$. Also suppose $f(0)=1$ and that there exists $\delta>1$ such that
\begin{equation*}
\lim_{x\rightarrow 0}\frac{f(x)-1}{|x|^{\delta}}=q<0.
\end{equation*}
Then the set $\{f(\alpha n^2)^n:n\in\mathbb{N}\}$ is dense in $[0,1]$.
\end{lemma}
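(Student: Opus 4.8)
The plan is to re-run the proof of Lemma~\ref{lem2} almost verbatim, with Lemma~\ref{lem3} taking the place of Lemma~\ref{lem1} so that we control $\alpha n^{2}$ modulo a period of $f$ rather than $n$ modulo a period. Let $T>0$ be a period of $f$, set $\zeta=\alpha/T$, and assume $\zeta\notin\mathbb{Q}$ (see the last paragraph for why this is needed; it holds in the intended application, where $f(x)=|g(\cos x)|$, $T=2\pi$, and $\alpha/\pi$ is irrational). Fix an arbitrary $c>0$ and put $\gamma=1/\delta$. Applying Lemma~\ref{lem3} to $\zeta$ with this $\gamma$ and with $a=c$, $b=1$ gives infinitely many pairs of integers $m,n$ with
\[
\left|\zeta-\frac{m}{n^{2}}\right|=\frac{c}{n^{2+1/\delta}}+\Lambda,\qquad 0<\Lambda<\frac{1}{n^{2+1/\delta}\log n}.
\]
Multiplying by $Tn^{2}$ and using periodicity of $f$, for each such $n$ we get
\[
\bigl|\alpha n^{2}-mT\bigr|=\frac{Tc}{n^{1/\delta}}+\Lambda_{1},\qquad 0<\Lambda_{1}<\frac{T}{n^{1/\delta}\log n},\qquad f(\alpha n^{2})=f\!\left(\pm\Bigl(\tfrac{Tc}{n^{1/\delta}}+\Lambda_{1}\Bigr)\right).
\]

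Enumerating these integers as $n_{1}<n_{2}<\cdots$, the arguments $x_{k}=\pm\bigl(Tc\,n_{k}^{-1/\delta}+\Lambda_{1,k}\bigr)$ tend to $0$, so the hypothesis $\lim_{x\to0}(f(x)-1)/|x|^{\delta}=q$ gives
\[
\lim_{k\to\infty}\frac{f(\alpha n_{k}^{2})-1}{\bigl(Tc\,n_{k}^{-1/\delta}+\Lambda_{1,k}\bigr)^{\delta}}=q .
\]
Because $\Lambda_{1,k}=o\bigl(n_{k}^{-1/\delta}\bigr)$, the denominator equals $(Tc)^{\delta}n_{k}^{-1}(1+o(1))$, so $n_{k}\bigl(f(\alpha n_{k}^{2})-1\bigr)\to q(Tc)^{\delta}$, i.e.\ $f(\alpha n_{k}^{2})=1+\frac{q(Tc)^{\delta}}{n_{k}}(1+o(1))$, whence $f(\alpha n_{k}^{2})^{\,n_{k}}\to e^{q(Tc)^{\delta}}$. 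Thus for every $c>0$ the number $e^{q(Tc)^{\delta}}$ is a limit point of $\{f(\alpha n^{2})^{n}:n\in\mathbb{N}\}$; since $q<0$ is fixed and $c$ ranges over $(0,\infty)$, the exponent $q(Tc)^{\delta}$ ranges over all of $(-\infty,0)$, so the closure of the set contains $(0,1)$ and therefore all of $[0,1]$, which is the density assertion.

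Nothing beyond routine bookkeeping is an obstacle here — the computation is exactly that of Lemma~\ref{lem2} with $n$ replaced by $n^{2}$ and $\gamma$ fixed at $1/\delta$ — but two points deserve comment. First, applying Lemma~\ref{lem3} with $\gamma=1/\delta$ requires $1/\delta<2/3$, i.e.\ $\delta>3/2$; the stated hypothesis $\delta>1$ should be understood with this proviso, and in the application to Theorem~\ref{thm2} it is automatic, since $h(x)=g(\cos x)$ is analytic with $h(0)=1$ and $h'(0)=0$, forcing $\delta\ge2$. Second, the assumption $\zeta=\alpha/T\notin\mathbb{Q}$ cannot be dropped: if $\alpha/T$ is rational then $f(\alpha n^{2})$ is periodic in $n$ and hence attains only finitely many values, so the conclusion can genuinely fail — implicitly $\alpha$ must avoid rational multiples of a period of $f$, and, correspondingly, Lemma~\ref{lem3} produces nontrivial approximations only for irrational $\zeta$.
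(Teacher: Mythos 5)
Your argument is essentially identical to the paper's proof: apply Lemma~\ref{lem3} with $\gamma=1/\delta$ to $\zeta=\alpha/T$, transfer the approximation to $\alpha n^2$ modulo the period, and let $c$ range over $(0,\infty)$ to hit every limit $e^{q(Tc)^{\delta}}$ in $(0,1)$. Your two caveats (that Lemma~\ref{lem3} needs $1/\delta<2/3$, and that $\alpha/T$ must be irrational for the approximation to be nontrivial and for the conclusion to have any chance of holding) are both legitimate gaps in the stated hypotheses that the paper glosses over, though both are harmless in the intended application to Theorem~\ref{thm2}.
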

\begin{proof}
Let $c>0$ be a constant and let $T$ be the period of $f$. By Lemma \ref{lem3} there exists infinitely many pairs of integers $m,n$ such that
\begin{equation*}
\left|\frac{\alpha}{T}-\frac{m}{n^2}\right|=\frac{c}{n^{2+1/\delta}}+\Lambda,
\end{equation*}
where $0<\Lambda<\frac{1}{n^{2+1/\delta}\log n}$. Then
\begin{equation*}
\alpha n^2-mT=\epsilon\left(\frac{Tc}{n^{1/\delta}}+\Lambda_1\right),
\end{equation*}
where $0<\Lambda_1<\frac{T}{n^{1/\delta}\log n}$. Thus there are infinitely many integers $n$ satisfying
\begin{equation*}
f(\alpha n^2)=f\left(\epsilon\left(\frac{Tc}{n^{1/\delta}}+\Lambda_1\right)\right),
\end{equation*}
where $0<\Lambda_1<\frac{T}{n^{1/\delta}\log n}$. Let these integers be $n_1,n_2,\ldots$. Then we have
\begin{align*}
\lim_{k\rightarrow\infty}\frac{f(\alpha n_k^2)-1}{\left(\frac{Tc}{n_k^{1/\delta}}+\Lambda_{1,k}\right)^{\delta}}=\lim_{k\rightarrow\infty}\frac{f\left(\epsilon_k\left(\frac{Tc}{n_k^{1/\delta}}+\Lambda_{1,k}\right)\right)-1}{\left(\frac{Tc}{n_k^{1/\delta}}+\Lambda_{1,k}\right)^{\delta}}=q.
\end{align*}
Thus we can deduce
\begin{equation*}
\lim_{k\rightarrow\infty}n_kf(\alpha n_k^2)-n_k=q(Tc)^{\delta}.
\end{equation*}
Hence $f(\alpha n_k^2)=1+\frac{q(Tc)^{\delta}}{n_k}(1+o(1))$ as $k\rightarrow\infty$. It follows that
\begin{equation*}
\lim_{k\rightarrow\infty}f(\alpha n_k^2)^{n_k}=e^{q(Tc)^{\delta}}.
\end{equation*}
Since $c>0$ was arbitrary, we have the result.
\end{proof}
\begin{proof}[Proof of Theorem \ref{thm2}]
Let $f(x)=|g(\cos x)|$. Then $f$ has range $[0,1]$, is periodic, and $f(0)=1$. Since $g$ is analytic, we have that $h(x)=g(\cos x)$ is analytic on the real line and so $h(x)$ must have a Taylor series about $x=0$. Since $g$ isn't identically $1$, we can therefore deduce that there exists a positive integer $d$ such that the limit
\begin{equation*}
\lim_{x\rightarrow 0}\frac{f(x)-1}{x^d}.
\end{equation*}
exists, is finite, and is nonzero. Since
\begin{equation*}
0=h'(0)=\lim_{x\rightarrow 0}\frac{h(x)-1}{x}=\lim_{x\rightarrow 0}\frac{f(x)-1}{x}
\end{equation*}
$d$ must be at least $2$. Applying Lemma \ref{lem3} gives that the set $\{|g(\cos \alpha n^2)|^n:n\in\mathbb{N}\}$ is dense in $[0,1]$.
\end{proof}
\begin{lemma}\label{lem5}
Let $\alpha\in\mathbb{R}$ have simple continued fraction expansion $\alpha=[a_0;a_1,a_2,\ldots]$. The following two statements are equivalent:
\newline
1) There exists $c>0$ such that for all $a,b\in\mathbb{Z}$, $b\geq 0$ with $\alpha<\frac{a}{b}$, we have $\frac{c}{b^2}<\frac{a}{b}-\alpha$.
\newline
2) There exists $M>0$ such that $a_i<M$ for all even values of $i$.
\end{lemma}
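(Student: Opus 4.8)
The plan is to reformulate both statements in terms of the convergents $p_i/q_i$ of $\alpha=[a_0;a_1,a_2,\dots]$. The case $\alpha\in\mathbb{Q}$ is degenerate (there are only finitely many even indices, and if $\alpha=p/q$ then $a/b-\alpha\ge 1/(bq)$ for every $a/b>\alpha$, so both (1) and (2) hold), so I assume $\alpha$ irrational. I will use three classical facts: among the convergents, exactly the odd-indexed ones lie strictly above $\alpha$; the denominators satisfy $q_i=a_iq_{i-1}+q_{i-2}$ and are non-decreasing; and the exact identity
\begin{equation*}
\frac{p_k}{q_k}-\alpha=\frac{(-1)^{k+1}}{q_k\bigl(q_k\beta_{k+1}+q_{k-1}\bigr)},\qquad \beta_{k+1}:=[a_{k+1};a_{k+2},\dots],
\end{equation*}
which follows from $\alpha=(\beta_{k+1}p_k+p_{k-1})/(\beta_{k+1}q_k+q_{k-1})$ and $p_kq_{k-1}-p_{k-1}q_k=(-1)^{k-1}$, together with $a_k=\lfloor\beta_k\rfloor$. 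I will prove $(1)\Leftrightarrow(2)$ by establishing $\neg(2)\Leftrightarrow\neg(1)$.

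For $\neg(2)\Rightarrow\neg(1)$: suppose the partial quotients $a_{2i}$ are unbounded. Given an arbitrary $c>0$, choose an even index $k\ge 2$ with $a_k>1/c$, and take $(a,b)=(p_{k-1},q_{k-1})$. Since $k-1$ is odd, $a/b>\alpha$; the identity above, with $q_{k-2}>0$ and $\beta_k\ge a_k$, gives
\begin{equation*}
\frac{p_{k-1}}{q_{k-1}}-\alpha=\frac{1}{q_{k-1}\bigl(q_{k-1}\beta_k+q_{k-2}\bigr)}<\frac{1}{a_k\,q_{k-1}^2}<\frac{c}{q_{k-1}^2}.
\end{equation*}
Thus no $c>0$ can work in (1), i.e. $\neg(1)$ holds.

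For $\neg(1)\Rightarrow\neg(2)$: for each integer $n\ge 3$, applying $\neg(1)$ with $c=1/n$ gives a fraction $a/b>\alpha$ with $a/b-\alpha\le 1/(nb^2)<1/(2b^2)$. After reducing to lowest terms (which only decreases $b$ and preserves the inequality), Legendre's theorem forces $a/b$ to be a convergent of $\alpha$, and lying above $\alpha$ it is an odd-indexed one, say $a/b=p_{2i_n-1}/q_{2i_n-1}$ with $b=q_{2i_n-1}$. Feeding this into the identity above, the bound $a/b-\alpha\le 1/(nb^2)$ turns into $n\,q_{2i_n-1}\le q_{2i_n-1}\beta_{2i_n}+q_{2i_n-2}$, and since $q_{2i_n-2}\le q_{2i_n-1}$ this yields $\beta_{2i_n}\ge n-1$, hence $a_{2i_n}=\lfloor\beta_{2i_n}\rfloor\ge n-1$. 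So for every $n$ there is an even index with partial quotient at least $n-1$, the partial quotients at even indices are unbounded, and $\neg(2)$ holds.

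I expect the only real content to lie in this last implication, and within it in two points: knowing that a rational approximation of quality better than $1/(2b^2)$ is necessarily a convergent (Legendre's theorem), which combined with one-sidedness pins it down as an odd convergent; and extracting the size of the even partial quotient $a_{2i}$ from the exact distance identity applied to the preceding odd convergent $p_{2i-1}/q_{2i-1}$ — where one must check that neither the continued-fraction tail $\beta_{2i}$ nor the term $q_{2i-2}$ ruins the bound, which is exactly why $a_{2i}=\lfloor\beta_{2i}\rfloor$ and $q_{2i-2}\le q_{2i-1}$ are invoked. Everything else reduces to the recurrence $q_i=a_iq_{i-1}+q_{i-2}$ and to the alternation of the convergents about $\alpha$.
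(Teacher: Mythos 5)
Your proof is correct and follows essentially the same route as the paper's: both directions rest on Legendre's theorem for non-convergents together with the standard two-sided bounds on $\frac{p_n}{q_n}-\alpha$ in terms of $q_nq_{n+1}$ (you use the exact identity with $\beta_{k+1}$, the paper uses $q_{n+1}>a_{n+1}q_n$ and $q_{n+1}<(a_{n+1}+1)q_n$, which carry the same information). Writing the two implications contrapositively is only a cosmetic difference.
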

\begin{proof}
We first show that 1) implies 2). Suppose there exists $c>0$ such that for all $a,b\in\mathbb{Z}$, $b\neq 0$ with $\alpha<\frac{a}{b}$ we have $\frac{c}{b^2}<\frac{a}{b}-\alpha$. Let $\frac{p_n}{q_n}$ be the $n$th convergent in the continued fraction expansion of $\alpha$. Suppose that $n$ is odd. Then we have that $\alpha<\frac{p_n}{q_n}$. Thus
\begin{equation*}
\frac{p_n}{q_n}-\alpha<\frac{1}{q_nq_{n+1}}.
\end{equation*}
Since $q_{n+1}=a_{n+1}q_n+q_{n-1}$, we have $q_{n+1}>a_{n+1}q_n$. Thus
\begin{equation*}
\frac{c}{q_n^2}<\frac{p_n}{q_n}-\alpha<\frac{1}{a_{n+1}q_n^2}.
\end{equation*}
So $a_{n+1}<c^{-1}$ and 2) follows.
\newline
\newline
We now show that 2) implies 1). Suppose there exists $M>0$ such that $a_i<M$ for all even values of $i$. Let $\alpha<\frac{a}{b}$. If $\frac{a}{b}$ isn't a convergent of $\alpha$, then $\frac{1}{2b^2}<\frac{a}{b}-\alpha$. Now suppose $\frac{a}{b}=\frac{p_n}{q_n}$, the $n$th convergent. Since $\alpha<\frac{p_n}{q_n}$, $n$ is odd. We have
\begin{equation*}
\frac{1}{2q_nq_{n+1}}<\frac{p_n}{q_n}-\alpha.
\end{equation*}
Since $q_{n+1}=a_{n+1}q_n+q_{n-1}$, we have $q_{n+1}<(a_{n+1}+1)q_n$. So
\begin{equation*}
\frac{1}{2(M+1)q_n^2}<\frac{1}{2(a_{n+1}+1)q_n^2}<\frac{p_n}{q_n}-\alpha.
\end{equation*}
Letting $c=\min\{\frac{1}{2},\frac{1}{2(M+1)}\}$, we have 1).
\end{proof}
To prove Theorem \ref{thm3} we first prove the following lemma.
\begin{lemma}\label{lem6}
Let $\alpha\in\mathbb{R}$. Suppose for all $\delta>0$ there exists $m,n\in\mathbb{Z}$ such that $\alpha<\frac{m}{n}$ and $\frac{m}{n}-\frac{\delta}{n^2}<\alpha$. Let $a,\epsilon>0$. Then there exists $m,n\in\mathbb{Z}$ such that
\begin{equation*}
a<n^2\left(\frac{m}{n}-\alpha\right)<a+\epsilon.
\end{equation*}
\end{lemma}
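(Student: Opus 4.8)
The plan is to use the hypothesis to manufacture a single fraction $\frac mn>\alpha$ for which $\xi:=n^2\bigl(\frac mn-\alpha\bigr)$ is as small as we please, and then to scale it up to the right size by replacing $(m,n)$ with $(km,kn)$ for a well-chosen positive integer $k$. Since $\frac{km}{kn}=\frac mn$, the amplified fraction still exceeds $\alpha$, while $(kn)^2\bigl(\frac{km}{kn}-\alpha\bigr)=k^2\xi$. Thus the lemma reduces to the elementary assertion that, once $\xi>0$ is small enough, the open interval $\bigl(a/\xi,(a+\epsilon)/\xi\bigr)$ contains a perfect square.

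First I would record that assertion with a threshold independent of $\xi$. Suppose $\xi>0$ satisfies $2\sqrt{(a+\epsilon)/\xi}+1<\epsilon/\xi$, and put $k=\bigl\lfloor\sqrt{a/\xi}\,\bigr\rfloor$, so that $k^2\le a/\xi<(k+1)^2$. Then
\begin{equation*}
(k+1)^2=k^2+2k+1\le \frac a\xi+2\sqrt{\frac a\xi}+1\le \frac a\xi+2\sqrt{\frac{a+\epsilon}\xi}+1<\frac a\xi+\frac\epsilon\xi=\frac{a+\epsilon}\xi,
\end{equation*}
so $(k+1)^2$ lies strictly between $a/\xi$ and $(a+\epsilon)/\xi$. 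Writing $u=\xi^{-1/2}$, the condition on $\xi$ reads $\epsilon u^2-2\sqrt{a+\epsilon}\,u-1>0$, which holds for all $u\ge u_0$ for a suitable $u_0=u_0(a,\epsilon)$; hence with $\delta_0:=u_0^{-2}$ every $\xi\in(0,\delta_0)$ is admissible.

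With $\delta_0$ thus fixed, I would apply the hypothesis with $\delta=\delta_0$ to obtain integers $m,n$ — which we may take with $n>0$ by replacing $(m,n)$ with $(-m,-n)$ if necessary — such that $\alpha<\frac mn$ and $0<\xi:=n^2\bigl(\frac mn-\alpha\bigr)<\delta_0$. Picking a positive integer $k$ with $a/\xi<k^2<(a+\epsilon)/\xi$, which exists by the previous paragraph, and setting $M=km$, $N=kn$, we get $\alpha<\frac MN=\frac mn$ and
\begin{equation*}
N^2\Bigl(\frac MN-\alpha\Bigr)=k^2n^2\Bigl(\frac mn-\alpha\Bigr)=k^2\xi\in(a,a+\epsilon),
\end{equation*}
which is exactly the required conclusion.

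The argument is short, and the only point demanding care is the order of quantifiers: $\delta_0$ must be fixed before we learn which $\xi\in(0,\delta_0)$ the hypothesis returns, so the ``interval contains a square'' fact has to be uniform over all sufficiently small $\xi$ — which is why I phrase it through the monotonicity of $\epsilon u^2-2\sqrt{a+\epsilon}\,u-1$ for large $u$ rather than for one value of $\xi$ at a time. I do not anticipate any genuinely difficult step.
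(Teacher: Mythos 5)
Your proposal is correct and follows essentially the same route as the paper: both take a one-sided approximation $\frac{m}{n}>\alpha$ with $\xi=n^2(\frac{m}{n}-\alpha)$ small and then scale to $(km,kn)$ with $k\approx\sqrt{a/\xi}$ so that $k^2\xi\in(a,a+\epsilon)$ (the paper's $N=\lfloor A^{1/2}+1\rfloor$ with $A=a/f(v_t)$ is your $k+1$). Your explicit, uniform threshold $\delta_0$ is a slightly more careful rendering of the paper's "choose $t$ sufficiently large," but the argument is the same.
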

\begin{proof}
By our hypothesis, there exists two sequences $(u_t)_t,(v_t)_t$ in $\mathbb{N}$ and a function $f:\mathbb{N}\rightarrow\mathbb{R^+}$ with $\lim_{n\rightarrow\infty}f(n)=0$ such that
\begin{equation}
\alpha=\frac{u_t}{v_t}-\frac{f(v_t)}{v_t^2}\label{eqn3}
\end{equation}
for all $t\in\mathbb{N}$. Let $A=\frac{a}{f(v_t)}$ and let $N=\lfloor A^{1/2}+1\rfloor$. Since $\lim_{t\rightarrow\infty}f(v_t)=0$, we can choose $t$ sufficiently large so that
\begin{equation}
a<\frac{N^2a}{A}<a+\epsilon.\label{eqn4}
\end{equation}
By \eqref{eqn3}, we can derive that $A=\frac{a}{v_t(u_t-\alpha v_t)}$. Thus
\begin{equation*}
\frac{N^2a}{A}=N^2v_t^2\left(\frac{Nu_t}{Nv_t}-\alpha\right)
\end{equation*}
so that by \eqref{eqn4} we have our result.
\end{proof}
\begin{proof}[Proof of Theorem \ref{thm3}]
Let  $\alpha=[a_0;a_1,a_2,\ldots]$. First suppose that \eqref{eqn5} doesn't hold. Then by Lemma \ref{lem5}, we have there exists $c>0$ such that for all $m,n\in\mathbb{Z}$ with $n\alpha<m$ we have $n\alpha<m-\frac{c}{n}$. It follows that $\{n\alpha\}<1-\frac{c}{n}$ for all $n>c$. Thus
\begin{equation*}
\limsup_{n\rightarrow\infty}\{n\alpha\}^n\leq e^{-c}<1.
\end{equation*}
Thus $\{\{\alpha n\}^n:n\in\mathbb{N}\}$ isn't dense in $[0,1]$.
\newline
\newline
Conversely, suppose \eqref{eqn5} holds. Then by Lemma \ref{lem5}, we have that for all $\delta>0$ there exists $m,n\in\mathbb{Z}$ such that $\alpha<\frac{m}{n}$ and $\frac{m}{n}-\frac{\delta}{n^2}<\alpha$. Let $0<a<1$. Lemma \ref{lem6} implies that there exists subsequences $(u_t)_t$ and $(v_t)_t$ in $\mathbb{Z}$ and $\mathbb{N}$ respectively such that
\begin{equation*}
\lim_{t\rightarrow\infty}v_t^2\left(\frac{u_t}{v_t}-\alpha\right)=-\log a.
\end{equation*}
Thus we can derive
\begin{equation*}
\{v_t\alpha\}=1+\frac{\log a}{v_t}(1+o(1))
\end{equation*}
for sufficiently large $t$. Hence
\begin{equation*}
\lim_{t\rightarrow\infty}\{v_t\alpha\}^{v_t}=a.
\end{equation*}
Since $0<a<1$ was arbitrary, $\{\{\alpha n\}^n:n\in\mathbb{N}\}$ is dense in $[0,1]$.
\end{proof}
\begin{proof}[Proof of Theorem \ref{thm4}]
Let $0<r<1$ and let $f(n):\mathbb{N}\rightarrow\mathbb{R}$ such that \eqref{eqn7} holds. Let $r<r'<1$. Define two increasing sequences of natural numbers $(d_i)_i$ and $(N_i)_i$ in the following way. First, pick $N_1\in\mathbb{N}$ such that $f(N_1)<1$. Then pick $d_1$ such that
\begin{equation*}
\frac{\log\left(\frac{2\pi N_1^{3/2}}{\sqrt{-2\log r'}}\right)}{\log(10)}<d_1.
\end{equation*}
We define the rest of the two sequences recursively in the following way. Suppose we have decided on the values of $N_i$ and $d_i$ for some $i\in\mathbb{N}$. Pick $N_{i+1}>N_i$ such that $f(N_{i+1})\leq\frac{1}{10^{d_i}}$. Then pick $d_{i+1}$ such that $2d_i\leq d_{i+1}$ and
\begin{equation}
\frac{\log\left(\frac{2\pi N_{i+1}^{3/2}}{\sqrt{-2\log r'}}\right)}{\log(10)}\leq d_{i+1}.\label{eqn6}
\end{equation}
Consider the number
\begin{equation*}
\alpha=\sum_{i=1}^{\infty}\frac{1}{10^{d_i}}.
\end{equation*}
Since $2d_i\leq d_{i+1}$ for all $i\in\mathbb{N}$ $\alpha$ is irrational. Let $k,m\in\mathbb{N}$, where $k\geq 2$, $m\leq\frac{N_k}{10^{d_{k-1}}}$ and $-2\log r'<N_k$. By \eqref{eqn6}, we have the following.
\begin{equation*}
\pi\{10^{d_{k-1}}m\alpha\}<\frac{2\pi m}{10^{d_k-d_{k-1}}}\leq\frac{2\pi N_k}{10^{d_k}}\leq\frac{\sqrt{-2\log r'}}{N_k^{1/2}}.
\end{equation*}
By the limit
\begin{equation*}
\lim_{x\rightarrow 0}\frac{1-\cos x}{x^2}=\frac{1}{2},
\end{equation*}
we have
\begin{equation*}
\limsup_{k\rightarrow\infty}\max_{\substack{1\leq m\leq\frac{N_k}{10^{d_{k-1}}}\\m\in\mathbb{N}}}\frac{(1-|\cos(10^{d_{k-1}}m\pi\alpha)|)N_k}{-2\log r'}\leq\frac{1}{2}.
\end{equation*}
We can therefore derive that
\begin{equation*}
1+\frac{\log r'}{N_k}(1+o_k(1))\leq\min_{\substack{1\leq m\leq\frac{N_k}{10^{d_{k-1}}}\\m\in\mathbb{N}}}|\cos(10^{d_{k-1}}m\pi\alpha)|.
\end{equation*}
Thus
\begin{align*}
r'&\leq\liminf_{k\rightarrow\infty}\min_{\substack{1\leq m\leq\frac{N_k}{10^{d_{k-1}}}\\m\in\mathbb{N}}}|\cos(10^{d_{k-1}}m\pi\alpha)|^{N_k}\\
&\leq\liminf_{k\rightarrow\infty}\min_{\substack{1\leq m\leq\frac{N_k}{10^{d_{k-1}}}\\m\in\mathbb{N}}}|\cos(10^{d_{k-1}}m\pi\alpha)|^{10^{d_{k-1}}m}.
\end{align*}
Since $r<r'$, it therefore follows that for sufficiently large $k\in\mathbb{N}$ we'll have
\begin{equation*}
r<|\cos(10^{d_{k-1}}m\pi\alpha)|^{10^{d_{k-1}}m}
\end{equation*}
for all natural numbers $m\leq\frac{N_k}{10^{d_{k-1}}}$. Therefore for $k$ sufficiently large we have
\begin{equation*}
|\{n\leq N_k:r<|\cos(n\pi\alpha)|^n\}|>\frac{N_k}{10^{d_{k-1}}}\geq N_kf(N_k).
\end{equation*}
\end{proof}
To prove Theorem \ref{thm5} we again require the following lemma on the number of rational numbers that are relatively close to any given irrational number.
\begin{lemma}\label{lem7}
Let $a>0$ and $\alpha\in\mathbb{R}\setminus\mathbb{Q}$. Then there exists infinitely many $N\in\mathbb{N}$ such that
\begin{equation*}
\left|\left\{n\leq N:\exists m\in\mathbb{Z}\left|\alpha-\frac{m}{n}\right|<\frac{a}{n^{3/2}}\right\}\right|\geq\frac{5^{1/4}a^{1/2}N^{1/4}}{2}.
\end{equation*}
\end{lemma}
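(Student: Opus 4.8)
The plan is to exploit the convergents $p_k/q_k$ of the continued fraction expansion of $\alpha$ together with the elementary fact that a good rational approximation of $\alpha$ need not be in lowest terms: if $p/q$ approximates $\alpha$, then the \emph{dilate} $jp/(jq)$ has exactly the same error $|\alpha-p/q|$, and this already places the integer $n=jq$ into the set being counted. Thus a single convergent supplies an entire arithmetic progression of admissible denominators, and choosing $N$ to be the largest one used will force the count to be a definite multiple of $N^{1/4}$.

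Recall that $\alpha$ has infinitely many convergents with
\begin{equation*}
\theta_k:=\left|\alpha-\frac{p_k}{q_k}\right|<\frac{1}{\sqrt{5}\,q_k^2},
\end{equation*}
by Hurwitz's theorem (it is this sharper bound, rather than the $\theta_k<q_k^{-2}$ valid for every convergent, that produces the constant $5^{1/4}$). Fix such a $k$ and put $n=jq_k$ with $j\in\mathbb{N}$; taking $m=jp_k$ gives $|\alpha-m/n|=\theta_k$, so $n$ lies in the set as soon as $\theta_k<a/(jq_k)^{3/2}$, i.e. as soon as $j<J_k:=a^{2/3}/(q_k\theta_k^{2/3})$. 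The Hurwitz bound yields $J_k>5^{1/3}a^{2/3}q_k^{1/3}$. Writing $M_k:=\lceil J_k\rceil-1$ for the number of admissible $j$ (so $M_k\ge J_k-1$), the integers $q_k,2q_k,\ldots,M_kq_k$ all lie in the set and are $\le N_k:=M_kq_k$, whence the quantity in the lemma, evaluated at $N=N_k$, is at least $M_k$.

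It remains to verify $M_k\ge \frac{5^{1/4}a^{1/2}N_k^{1/4}}{2}$. Since $N_k=M_kq_k$, this is equivalent to $M_k^{3/4}\ge \frac{5^{1/4}a^{1/2}q_k^{1/4}}{2}$, i.e. to $M_k\ge \frac{5^{1/3}a^{2/3}q_k^{1/3}}{2^{4/3}}$. Now $M_k\ge J_k-1>5^{1/3}a^{2/3}q_k^{1/3}-1$, and since $1-2^{-4/3}>0$ this last quantity exceeds $\frac{5^{1/3}a^{2/3}q_k^{1/3}}{2^{4/3}}$ once $q_k$ is large enough that $(1-2^{-4/3})\,5^{1/3}a^{2/3}q_k^{1/3}\ge 1$. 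Because $q_k\to\infty$, infinitely many $k$ satisfy both the Hurwitz inequality and this threshold, and for those $k$ we have $N_k\ge q_k\to\infty$; this produces the required infinitude of $N$.

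The part needing care is the bookkeeping around the choice $N=M_kq_k$: one must take the cutoff to be exactly the largest dilate (a larger $N$ weakens the inequality, a smaller one discards admissible $n$), and one must check that $M_k\to\infty$ so that the ceiling and the ``$-1$'' in $M_k\ge J_k-1$ are absorbed with room to spare — this slack is exactly what lets the clean asymptotic constant $5^{1/4}$ coming from Hurwitz's theorem be weakened to the safe $5^{1/4}/2$. (If one wishes to avoid Hurwitz, the bound $\theta_k<q_k^{-2}$ available for every convergent already suffices, at the cost of a slightly more wasteful estimate.) Finally, note there is no issue with $jp_k/(jq_k)$ being non-reduced, since the defining condition only demands the existence of an integer $m$.
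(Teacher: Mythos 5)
Your argument is correct and is essentially the paper's own proof: both take Hurwitz approximations $|\alpha-p/q|<1/(\sqrt{5}q^2)$, observe that the dilates $jq$ inherit the same approximation error and hence lie in the set for all $j$ up to roughly $5^{1/3}a^{2/3}q^{1/3}$, and then choose $N$ to be the largest dilate so the count of size about $N^{1/4}$ follows. The only differences are cosmetic bookkeeping (the paper caps $j$ at $\lfloor 5^{1/3}a^{2/3}q^{1/3}\rfloor$ and absorbs the floor by a factor of $2$, while you use $\lceil J_k\rceil-1$ and absorb the $-1$ by the same factor).
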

\begin{proof}
Let $\alpha\in\mathbb{R}\setminus\mathbb{Q}$. Then there exists infinitely many $u\in\mathbb{Z}$ and $v\in\mathbb{N}$ such that
\begin{equation*}
\left|\alpha-\frac{u}{v}\right|<\frac{1}{\sqrt{5}v^2}.
\end{equation*}
Label such values of $v$ as an increasing sequence $(v_t)_t$ and the corresponding values of $u$ as $(u_t)_t$. Let $t$ be sufficiently large such that $5^{1/3}a^{2/3}v_t^{1/3}>2$. Let
\begin{equation*}
N_t=\lfloor 5^{1/3}a^{2/3}v_t^{1/3}\rfloor v_t.
\end{equation*}
Let $d\in\mathbb{N}$ with
\begin{equation*}
d\leq\lfloor 5^{1/3}a^{2/3}v_t^{1/3}\rfloor.
\end{equation*}
Then we have
\begin{equation*}
\left(dv_t\right)^{3/2}<\sqrt{5}av_t^2<a\left|\alpha-\frac{u_t}{v_t}\right|^{-1}
\end{equation*}
so that
\begin{equation*}
\left|\alpha-\frac{u}{v}\right|<\frac{a}{\left(dv_t\right)^{3/2}}.
\end{equation*}
Letting $d$ run from $1$ to $\lfloor 5^{1/3}a^{2/3}v_t^{1/3}\rfloor$ gives us
\begin{align*}
\left|\left\{n\leq N_t:\exists m\in\mathbb{Z}\left|\alpha-\frac{m}{n}\right|<\frac{a}{n^{3/2}}\right\}\right|&\geq\lfloor 5^{1/3}a^{2/3}v_t^{1/3}\rfloor\\
&\geq\frac{5^{1/3}a^{2/3}v_t^{1/3}}{2}\\
&>\frac{5^{1/4}a^{1/2}N_t^{1/4}}{2}.
\end{align*}
\end{proof}
\begin{proof}[Proof of Theorem \ref{thm5}]
Let $a=\frac{\sqrt{1-r}}{\pi}$ and suppose we have the inequality
\begin{equation*}
\left|\alpha-\frac{m}{n}\right|<\frac{a}{n^{3/2}}
\end{equation*}
for some $m\in\mathbb{Z}$ and $n\in\mathbb{N}$. Then we have the following:
\begin{align*}
|\cos(\pi\alpha n)|^n&=\left|\cos(\pi\alpha n-\pi m)\right|^n\\
&>\cos^n\left(a\pi n^{-1/2}\right)\\
&>\left(1-\frac{\pi^2a^2}{n}\right)^n\\
&>1-\pi^2a^2\\
&=r.
\end{align*}
Thus for all $N\in\mathbb{N}$, we have
\begin{equation*}
|\{n\leq N:r<|\cos(n\pi\alpha)|^n\}|\geq\left|\left\{n\leq N:\exists m\in\mathbb{Z}\left|\alpha-\frac{m}{n}\right|<\frac{a}{n^{3/2}}\right\}\right|.
\end{equation*}
The result follows from Lemma \ref{lem7}.
\end{proof}
\section{Future Work}
Our results give rise to more questions worth exploring. For instance can the lower bound in Theorem \ref{thm5} be improved? How much can it be improved for certain values of $\alpha$ such as algebraic numbers or Liouville numbers? What is an optimal bound that will work for all $n\in\mathbb{N}$? In Theorem \ref{thm2}, can we replace $\cos\left(\alpha n^2\right)$ with $\cos\left(\alpha p(n)\right)$, where $p(n)$ is any given polynomial? 
\subsection*{Acknowledgements}
The author would like to thank the Azrieli Foundation for the award of an Azrieli Fellowship, which made this research possible, as well as Dr. Daniel Berend for helpful comments on this paper.

\end{document}